\numberwithin{equation}{section}
\newtheorem{theorem}{Theorem}[section]
\newtheorem{lemma}[theorem]{Lemma}
\newtheorem{corollary}[theorem]{Corollary}
\newtheorem{conjecture}[theorem]{Conjecture}
\theoremstyle{definition}
\newtheorem{definition}[theorem]{Definition}
\newtheorem{construction}[theorem]{Construction}
\newtheorem{remark}[theorem]{Remark}
\newtheorem{example}[theorem]{Example}
\begin{document}

\title{Balanced vertex decomposable simplicial complexes and their
$h$-vectors}

\thanks{Version: May 9, 2012}

\author{Jennifer Biermann}
\address{Department of Mathematical Sciences, 
Lakehead University, 
Thunder Bay, ON P7B 5E1, Canada}
\email{jvbierma@lakeheadu.ca}
\urladdr{http://jenniferbiermann.lakeheadu.ca/}

\author{Adam Van Tuyl}
\address{Department of Mathematical Sciences, 
Lakehead University, 
Thunder Bay, ON P7B 5E1, Canada}
\email{avantuyl@lakeheadu.ca}
\urladdr{http://flash.lakeheadu.ca/~avantuyl}

\keywords{simplicial complex, vertex decomposable, flag complex, $h$-vector
\\\indent
2000 {\em Mathematics Subject Classification.} 05E45, 05A15, 13F55}

\begin{abstract}
Given any finite simplicial complex $\Delta$, we show how to construct
a new simplicial complex $\Delta_{\chi}$ that is balanced
and vertex decomposable.  Moreover, we show that the
$h$-vector of the simplicial complex  $\Delta_{\chi}$ is precisely
the $f$-vector of the original complex $\Delta$.  
Our construction generalizes the ``whiskering'' construction
of Villarreal, and Cook and Nagel.  As a corollary of our work,
we add a new equivalent statement to a theorem of Bj\"orner,
Frankl, and Stanley that classifies the $f$-vectors of simplicial
complexes.  We also prove a special case of a conjecture
of Cook and Nagel, and  Constantinescu and Varbaro on the 
$h$-vectors of flag complexes.
\end{abstract}

\maketitle


\section{Introduction}

The work of this paper was inspired by the ``whiskering'' construction
of finite simple graphs
found in work of
Villarreal \cite{V} and Cook and Nagel \cite{CN}.  Given a finite
graph $G = (V_G,E_G)$ on the vertex set $V_G = \{x_1,\ldots,x_n\}$,
Villarreal constructed a new graph, denoted $G^W$, on the vertex set 
$\{x_1,\ldots,x_n,y_1,\ldots,y_n\}$ by adjoining the edges $\{x_i,y_i\}$
for every $i$ to the graph $G$.  The new graph has a ``whisker'' at every
vertex of the original graph.
As discovered by Villarreal,
the edge ideal of the new graph $G^W$, that is, 
\[I(G^W) = 
\langle w_iw_j ~
~|~ \{w_i,w_j\} \in E_{G^W} \rangle \subseteq R =
k[x_1,\ldots,x_n,y_1,\ldots,y_n]\] has the property that
$R/I(G^W)$ is Cohen-Macaulay.  It was later observed by
Dochtermann and Engstr\"om \cite{DE} and Woodroofe \cite{W} and generalized by
Cook and Nagel \cite{CN}, that one could deduce this result
by studying
the topological properties of the
simplicial complex associated to $I(G^W)$ via the Stanley-Reisner
correspondence.  In particular, Villarreal's construction can be viewed
as creating a new independence complex $\Delta'$ 
(sometimes called a flag complex)
from the independence complex $\Delta$ of $G$.   This new complex
$\Delta'$ is vertex decomposable (as defined by Provan and Billera \cite{BP}),
and it is this topological property that implies that $R/I(G^W)$
is Cohen-Macaulay.

Our entry point was to ask  whether there is a more general 
theory that can be
applied to all simplicial complexes. Moreover, we want this general
theory to specialize to known cases for flag complexes.  We will
show that a general construction exists using the notion
of a colouring $\chi$ of a simplicial complex $\Delta$ (all
terms will be properly defined in the following sections).   
From the colouring $\chi$ and complex $\Delta$, we make a new complex,
denoted $\Delta_{\chi}$.

The first main result of this paper is to show that regardless
of how one colours $\Delta$, the construction of $\Delta_{\chi}$ always results
in a vertex decomposable simplicial complex:

\begin{theorem}[Theorem \ref{vertex decomposable}] 
For any simplicial complex $\Delta$, and any $s$-colouring $\chi$ of $\Delta$, 
the simplicial complex $\Delta_\chi$ is balanced and vertex decomposable.
\end{theorem}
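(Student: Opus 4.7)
I would prove the theorem by induction on $|V(\Delta)|$, exploiting the recursive nature of the construction $\Delta \mapsto \Delta_{\chi}$. Guided by the whiskering analogy, I expect $\Delta_{\chi}$ to be built on the vertex set $V(\Delta) \cup \{y_1,\ldots,y_s\}$ with the facets being $F \cup \{y_i : i \notin \chi(F)\}$ as $F$ ranges over the faces of $\Delta$ (not just its facets), where $\chi(F)$ denotes the set of colours used on $F$. Under this description, balancedness is essentially free: every facet has size $s$ and, after declaring each $y_i$ to have colour $i$, carries exactly one vertex of each colour in $\{1,\ldots,s\}$. So $\Delta_{\chi}$ is pure of dimension $s-1$ and has a vertex colouring respecting the facets, which is the definition of balanced.

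The work is in vertex decomposability. For the base case I would take $\Delta = \{\emptyset\}$, where $\Delta_{\chi}$ is the simplex on $\{y_1,\ldots,y_s\}$, clearly vertex decomposable. For the inductive step, I would fix any vertex $v \in V(\Delta)$, say of colour $s$, and show that $v$ is a shedding vertex of $\Delta_{\chi}$. The key identifications to verify are
\[
\lk_{\Delta_{\chi}}(v) = \bigl(\lk_{\Delta}(v)\bigr)_{\chi|}, \qquad \del_{\Delta_{\chi}}(v) = \bigl(\del_{\Delta}(v)\bigr)_{\chi|},
\]
where $\chi|$ denotes the restriction of $\chi$ to the appropriate subset of $V(\Delta)$. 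Since $\lk_{\Delta}(v)$ and $\del_{\Delta}(v)$ each have strictly fewer vertices than $\Delta$, the inductive hypothesis applies and both sides are vertex decomposable.

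The shedding condition — no facet of $\lk_{\Delta_\chi}(v)$ is a facet of $\del_{\Delta_\chi}(v)$ — should then drop out by a pure dimension count: after the identifications above, facets on the link side have cardinality $s-1$ (colour $s$ is forbidden on $\lk_{\Delta}(v)$ by properness of $\chi$), whereas facets on the deletion side retain cardinality $s$ because the full colour set is still represented using $y_s$.

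The step I expect to be the main obstacle is the identification $\del_{\Delta_{\chi}}(v) = (\del_{\Delta}(v))_{\chi|}$. The subtlety is that a facet of $\Delta_{\chi}$ of the form $F \cup \{y_i : i \notin \chi(F)\}$ with $v \in F$ contributes, after deleting $v$, a face of $\del_{\Delta_\chi}(v)$ that could a priori be maximal; one must check it is absorbed into the facet coming from $F \setminus \{v\} \in \del_{\Delta}(v)$, which is $(F \setminus \{v\}) \cup \{y_i : i \notin \chi(F \setminus \{v\})\}$ and differs precisely by containing the extra vertex $y_s$. Once this containment is in place, every facet of $\del_{\Delta_{\chi}}(v)$ comes from a face of $\Delta$ avoiding $v$, and the identification — hence the whole induction — goes through.
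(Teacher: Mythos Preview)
Your proposal is correct and follows essentially the same approach as the paper: induct on $|V(\Delta)|$, pick a vertex $v$ of $\Delta$, and identify $\del_{\Delta_\chi}(v)$ and $\lk_{\Delta_\chi}(v)$ with $(\del_\Delta(v))_{\chi'}$ and $(\lk_\Delta(v))_{\chi''}$ for the restricted colourings, then apply the inductive hypothesis. The only cosmetic differences are that the paper keeps all $s$ colour classes (allowing empties) on the link side rather than dropping the colour of $v$, and it does not verify a shedding condition separately because in the pure Provan--Billera framework it uses, that condition is subsumed by the purity of the deletion, which comes for free from the construction.
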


\noindent
Here, balanced means the simplicial complex has a colouring with $(\dim \Delta_{\chi})+1$ 
colours.
Results of \cite{CN,DE,V} now become
special cases of this theorem since ``whiskering'' will be shown to be
equivalent to colouring the independence complex of a graph.

We investigate the consequences of Theorem \ref{vertex decomposable}
in Section 4.  One such consequence 
is the addition of the implication $(i) \implies (ii)$ 
to the following theorem:

\begin{theorem}[Theorem \ref{BFSThem}] Let $m = (m_1,\ldots,m_t) \in \mathbb{Z}_+^t$.  The following are equivalent:
\begin{enumerate}
\item[$(i)$] $m$ is the $f$-vector of a simplicial complex.
\item[$(ii)$] $m$ is the $h$-vector of a balanced, vertex decomposable simplicial complex.
\item[$(iii)$] $m$ is the $h$-vector of a balanced, shellable simplicial complex.
\item[$(iv)$] $m$ is the $h$-vector of a balanced, Cohen-Macaulay simplicial complex.
\end{enumerate}
\end{theorem}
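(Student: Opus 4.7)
The plan is to build on the classical Björner--Frankl--Stanley theorem, which already supplies the equivalences $(i) \Leftrightarrow (iii) \Leftrightarrow (iv)$, by inserting statement $(ii)$ into the cycle. I would prove the two new implications $(i) \Rightarrow (ii) \Rightarrow (iii)$; combined with the known implication $(iii) \Rightarrow (i)$ coming from BFS, this yields the full equivalence.

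The implication $(ii) \Rightarrow (iii)$ is essentially standard. A vertex decomposable simplicial complex is shellable by the classical result of Provan and Billera, and balancedness is a property of the underlying abstract complex that is preserved under this upgrade: the same $(\dim \Delta_\chi + 1)$-colouring that witnesses balancedness in $(ii)$ continues to witness it in $(iii)$. So this step needs only a brief citation of \cite{BP} together with the observation that the complex itself is unchanged.

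The genuinely new content is $(i) \Rightarrow (ii)$, and here I would invoke the construction $\Delta \mapsto \Delta_\chi$ introduced earlier in the paper. Given $m \in \mathbb{Z}_+^t$ realised as the $f$-vector of some simplicial complex $\Delta$, pick any colouring $\chi$ of $\Delta$ (for instance, the trivial one in which every vertex receives a distinct colour) and form the associated complex $\Delta_\chi$. Theorem~\ref{vertex decomposable} tells me immediately that $\Delta_\chi$ is balanced and vertex decomposable, and the companion identity $h(\Delta_\chi) = f(\Delta)$---the second main result of the paper, announced in the abstract---then gives $h(\Delta_\chi) = m$, producing the required witness.

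The main obstacle does not actually lie inside this theorem. Once Theorem~\ref{vertex decomposable} and the $h$-vector/$f$-vector identity are available, the equivalence is a short piece of bookkeeping. The real combinatorial work has been absorbed into those two supporting results; this theorem is the payoff, showing that the new implication $(i) \Rightarrow (ii)$ actually sharpens BFS by replacing Cohen--Macaulayness or shellability with the strictly stronger property of vertex decomposability, at no cost in the resulting class of $h$-vectors.
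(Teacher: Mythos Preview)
Your proposal is correct and matches the paper's own proof essentially line for line: the paper obtains $(i)\Rightarrow(ii)$ by citing Theorem~\ref{hvector} (the $h(\Delta_\chi)=f(\Delta)$ identity), obtains $(ii)\Rightarrow(iii)\Rightarrow(iv)$ from the standard chain vertex decomposable $\Rightarrow$ shellable $\Rightarrow$ Cohen--Macaulay, and closes the loop via Stanley's $(iv)\Rightarrow(i)$, noting that the equivalence of $(i)$, $(iii)$, $(iv)$ is the original Bj\"orner--Frankl--Stanley theorem. Your only cosmetic difference is closing the cycle at $(iii)\Rightarrow(i)$ rather than going through $(iv)$, which is immaterial.
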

The equivalence of statements $(i), (iii)$ and $(iv)$ was first proved by
Bj\"orner, Frankl, and Stanley \cite{BFS}. 
It should be noted that versions of $(i) \implies (ii)$ have appeared in the
literature in special cases (see, e.g., \cite[Proposition 4.1]{CV},\cite[Proposition 3.8]{CN}, 
\cite[Proposition 3.7]{F}), but to the best of our knowledge, no version
of the above theorem has appeared before.

Another consequence is a formula for the graded Betti
numbers of the Stanley-Reisner ideal of the Alexander dual of $\Delta_{\chi}$ in terms
of the $f$-vector of $\Delta$.  

\begin{theorem}[Theorem \ref{bettinumbers}]
Let $f(\Delta) = (f_{-1},f_0,\ldots,f_{d})$ be the $f$-vector
of a $d$-dimensional simplicial complex $\Delta$
on $V= \{x_1,\ldots,x_n\}$, and let $\chi$
be any $s$-colouring of $\Delta$.   Then, for all $i \geq 0$,
\[\beta_{i,n+i}(I_{\Delta_{\chi}^{\vee}}) = \sum_{j=i}^{d+1} \binom{j}{i} f_{j-1}(\Delta).\]
\end{theorem}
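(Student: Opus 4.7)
The plan is to use the Eagon--Reiner theorem to reduce the computation to a single linear strand of Betti numbers, and then to extract those numbers by comparing two expressions for the Hilbert series of $k[\Delta_\chi^\vee]$.

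By Theorem~\ref{vertex decomposable}, $\Delta_\chi$ is vertex decomposable and therefore Cohen--Macaulay, so by the Eagon--Reiner theorem $I_{\Delta_\chi^\vee}$ has a linear resolution. Since $\Delta_\chi$ is balanced with $s$ colours on the $(n+s)$-element vertex set $V\cup Y$ (with $|Y|=s$), every facet of $\Delta_\chi$ has cardinality $s$, so its complementary minimal generator of $I_{\Delta_\chi^\vee}$ has degree $n$. Hence $\beta_{i,j}(I_{\Delta_\chi^\vee})=0$ unless $j=n+i$, and it remains to determine $\beta_{i,n+i}$.

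I would then express $H_{k[\Delta_\chi^\vee]}(t)$ in two different ways. The linear resolution gives
\[H_{k[\Delta_\chi^\vee]}(t)=\frac{1-\sum_{i\geq 0}(-1)^i\beta_{i,n+i}(I_{\Delta_\chi^\vee})\,t^{n+i}}{(1-t)^{n+s}},\]
whereas the face expansion $H_{k[\Gamma]}(t)=\sum_{F\in\Gamma}\bigl(t/(1-t)\bigr)^{|F|}$ applied to $\Gamma=\Delta_\chi^\vee$, combined with the bijection $F\in\Delta_\chi^\vee\iff (V\cup Y)\setminus F\notin\Delta_\chi$, yields after clearing $(1-t)^{n+s}$ the equality
\[\sum_{i\geq 0}(-1)^i\beta_{i,n+i}(I_{\Delta_\chi^\vee})\,t^{n+i}=\sum_{G\in\Delta_\chi}t^{n+s-|G|}(1-t)^{|G|}.\]

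Finally I would extract the coefficient of $t^{n+i}$ from both sides. Each face of $\Delta_\chi$ decomposes uniquely as $F\cup Y'$ with $F\in\Delta$ and $Y'$ a subset of the $y$-vertices whose colours avoid $\chi(F)$, giving the count $f_{m-1}(\Delta_\chi)=\sum_{k}\binom{s-k}{m-k}f_{k-1}(\Delta)$. Substituting this into the extracted coefficient and collapsing the inner sum by the Chu--Vandermonde-type identity
\[\sum_{p\geq 0}(-1)^p\binom{s-k}{p}\binom{s-p}{s-i}=\binom{k}{i}\]
produces the desired $\sum_{j\geq i}\binom{j}{i}f_{j-1}(\Delta)$. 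The main obstacle is the bookkeeping in the two Hilbert-series expansions and in the final binomial collapse; no single step is deep, but several substitutions must align. A conceptually cleaner alternative, which I would adopt if it shortens the writing, is to first prove the general identity $\beta_{i,N-s+i}(I_{\Gamma^\vee})=\sum_j\binom{j}{i}h_j(\Gamma)$ for any Cohen--Macaulay complex $\Gamma$ of Krull dimension $s$ on $N$ vertices, and then invoke the paper's $h$-vector equality $h_j(\Delta_\chi)=f_{j-1}(\Delta)$ to conclude.
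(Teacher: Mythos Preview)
Your proposal is correct, and in fact the ``conceptually cleaner alternative'' you sketch at the end is exactly the route the paper takes: it quotes the Eagon--Reiner formula \cite[Corollary~5]{ER}
\[
\sum_{i\geq 1}\beta_i(R/I_{\Delta_\chi^\vee})\,t^{i-1}=\sum_{i\geq 0}h_i(\Delta_\chi)(t+1)^i,
\]
observes (as you do) that the resolution is linear so $\beta_i=\beta_{i,n+i}$, and then substitutes $h_i(\Delta_\chi)=f_{i-1}(\Delta)$ from Theorem~\ref{hvector}. Expanding $(t+1)^j$ gives the stated coefficients immediately.

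Your primary route---computing $H_{k[\Delta_\chi^\vee]}(t)$ from the face expansion, rewriting via the Alexander-dual bijection, and then collapsing with the Vandermonde-type identity---is a valid and self-contained derivation, but it re-proves from scratch the content of the Eagon--Reiner Betti/$h$-vector formula in this special case. The paper's approach is shorter precisely because it outsources that step to \cite{ER}; your direct computation buys independence from that reference at the cost of the bookkeeping you flag. Since the paper already cites \cite{ER} and has established $h(\Delta_\chi)=f(\Delta)$, I would follow your own advice and adopt the alternative.
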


\noindent
Because $\Delta_{\chi}$ is vertex decomposable, $R/I_{\Delta_{\chi}}$ is also
Cohen-Macaulay, so by the Eagon-Reiner Theorem \cite{ER}, 
the ideal $I_{\Delta_{\chi}^\vee}$ has 
a linear resolution.  Thus Theorem \ref{bettinumbers} describes all the Betti
numbers of $I_{\Delta^\vee_{\chi}}$.  Thus, starting from
any $f$-vector, we can construct an ideal with a linear resolution whose
Betti numbers only depend upon the $f$-vector.  
This result could also be deduced from recent work
 Herzog, Sharifan, and Varbaro \cite{HSV} which classifies all
sequences which can be the sequence of Betti numbers for an ideal with a linear 
minimal free resolution. 
However, the ideals of \cite{HSV} need not be square-free monomial ideals.

We round out this paper by describing when our construction can be reversed
so that one can start with a balanced vertex decomposable simplicial complex $\Delta$
and construct another simplicial complex $\Delta'$ such that $f$-vector
of $\Delta'$ is the same as the $h$-vector of $\Delta$.  We use this procedure to prove:
\[\left\{
\begin{array}{c}
\mbox{$f$-vectors of independence} \\
\mbox{complexes of chordal graphs}
\end{array}
\right\} =
\left\{
\begin{array}{c}
\mbox{$h$-vectors of balanced vertex decomposable}\\
\mbox{independence complexes of chordal graphs}
\end{array}
\right\}.\]
This proves a special case of a conjecture of Cook and Nagel \cite{CN} and
Constantinescu and Varbaro \cite{CV} that the set of $f$-vectors of a flag
complexes is precisely the set of $h$-vectors of balanced vertex 
decomposable flag complexes.  

As a final comment, this paper does not discuss the ``whiskering'' procedure 
found in \cite{FH} in which
whiskers are added to only some of the vertices.  In ongoing work with Francisco and H\`a,
we are currently investigating how to partially whisker a simplicial complex.

\noindent
{\bf Acknowledgements.} 
The authors made use of the computer
programs {\tt CoCoA} \cite{C} and {\em Macaulay 2} \cite{Mt}, including the {\em Macaulay 2}
package {\tt SimplicialDecomposability} of David Cook II \cite{Ct}.
The second author acknowledges the support of NSERC.


\section{Prerequisite background on simplicial complexes}

We work over the polynomial rings 
$S = k[x_1, \dots, x_n]$ and $R= k[x_1, \dots, x_n, y_1, \dots, y_s]$ 
where $k$ is any field.  We recall the relevant background
on simplicial complexes.

\begin{definition} A finite {\it simplicial complex} $\Delta$ on a
finite vertex set $V$ is a 
collection of subsets of $V$ with the 
property that if $\sigma \in \Delta$ and $\tau$ is a subset of $\sigma$, 
then $\tau \in \Delta$.  The elements of $\Delta$ are called {\it faces}.
\end{definition}

The vertex sets of our simplicial complexes will 
be either the set $\{x_1, \dots, x_n\}$ 
or $\{x_1, \dots, x_n, y_1, \dots, y_s\}$.  Because of this, we sometimes write faces as monomials.

If $\Delta$ is a simplicial complex and $\sigma \in \Delta$, then
we say $\sigma$ has {\it dimension} $d$ if the $|\sigma| = d+1$ 
(by convention, the empty set has dimension -1).  The 
maximal faces of $\Delta$ with respect to inclusion are called
 the {\it facets} of $\Delta$.   The {\it dimension} of $\Delta$ is 
the maximum of the dimensions of its facets.  If all of the facets of $\Delta$ are of the same dimension we say that $\Delta$ is {\em pure}.  
If $F_1,\ldots,F_t$ is a complete
list of the facets of $\Delta$, we sometimes write $\Delta$ as
$\Delta = \langle F_1,\ldots,F_t\rangle$.

An important combinatorial invariant of a simplicial complex is its 
$f$-vector.

\begin{definition} Let $\Delta$ be a finite simplicial complex of dimension $d$ 
and let $f_i$ denote the number of faces of $\Delta$ of dimension $i$.  The 
{\it $f$-vector} of $\Delta$, denoted $f(\Delta)$, is then the vector
\[
f(\Delta) = (f_{-1}, f_0, \dots, f_d) .
\]
\end{definition}

We now recall some important operations on simplicial complexes.
If $\sigma$ is a face of a simplicial complex $\Delta$, then the 
{\it deletion} of $\sigma$ from $\Delta$ is the simplicial complex defined by
\[
\Delta \setminus \sigma = \{ \tau \in \Delta ~|~ \sigma \not \subseteq \tau\} .
\]
The {\it link} of $\sigma$ in $\Delta$ is the simplicial complex defined by
\[
\rm{link}_{\Delta}(\sigma) = \{ \tau \in \Delta ~|~ \sigma \cap \tau = 
\emptyset, \sigma \cup \tau \in \Delta\}.
\]
When $\sigma = \{v\}$, we shall abuse notation and
write $\Delta \setminus v$ (respectively ${\rm link}_{\Delta}(v)$) for
$\Delta \setminus \{v\}$ (respectively ${\rm link}_{\Delta}(\{v\})$).

We shall be particularly interested in the following class of
simplicial complexes.  This class was first introduced in the pure case by Provan
and Billera \cite{BP}.

\begin{definition} A pure simplicial complex $\Delta$ 
is called \emph{vertex decomposable} if
\begin{enumerate}
\item $\Delta$ is a simplex, or 
\item there is some vertex $v \in V$ such that $\Delta \setminus v$ and 
${\rm link}_\Delta(v)$ are vertex decomposable.
\end{enumerate}
\end{definition}
Although there is a notion of non-pure vertex decomposability (see
\cite{BW}), in this paper we assume that all vertex decomposable simplicial complexes are pure.

Key to our main construction introduced in Section 3 is the notion
of a colouring.

\begin{definition} Let $\Delta$ be a simplicial complex on the vertex set $V$ 
 with facets $F_1, \dots, F_t$. 
An {\it s-colouring} of $\Delta$  is a partition of the vertices 
 $V = V_1\cup \dots \cup V_s$ (where the sets $V_i$ are allowed to be empty) such that 
$|F_i \cap V_j| \leq 1$
for all 
$1 \leq i \leq t, 1 \leq j \leq s$.
We will sometimes write $\chi$ is an $s$-colouring of $\Delta$
to mean $\chi$ is a specific partition of $V$ that gives an $s$-colouring of $\Delta$.
If there exists an $s$-colouring, we say that 
$\Delta$ is \emph{s-colourable}.  
If $\Delta$ has dimension $d-1$, then we say that 
$\Delta$ is \emph{balanced} if it is $d$-colourable.
\end{definition}

\begin{example}  If $\Delta$ is simplicial complex on $|V| = n$ vertices,
then $\Delta$ is $n$-colourable; indeed, we take our colouring
to be $V = \{x_1\} \cup \{x_2\} \cup \cdots \cup \{x_n\}$.
\end{example}

\section{Vertex decomposable results}\label{vd results}

Starting with an $s$-colourable simplicial complex, we introduce a procedure
to construct a new simplicial complex that is pure of dimension $s-1$, balanced,
and furthermore, vertex decomposable.  The whiskering constructions
found in \cite{CN,V}  for flag complexes (equivalently, independence complexes
of graphs) are then special cases of our construction.

We build a new simplicial complex from $\Delta$ and a colouring of $\Delta$.

\begin{construction}\label{DeltaChi}  Let $\Delta$ be a simplicial complex on the vertex set $\{x_1, \dots, x_n\}$.  Given an $s$-colouring $\chi$ of $\Delta$ given by $V= V_1 \cup \dots \cup V_s$, we define  $\Delta_\chi$ on vertex set 
$\{x_1, \dots, x_n, y_1, \dots, y_s\}$ to be the simplicial complex with faces $\sigma \cup \tau$ where $\sigma$ is a face of $\Delta$ and $\tau$ is any subset of $\{y_1,\dots, y_s\}$ such that for all $y_j \in \tau$ we have $\sigma \cap V_j = \emptyset$. 
\end{construction}

\begin{example}\label{RunningExample}
Let $\Delta$ be the simplicial complex shown in Figure \ref{RunningExampleFig}. 
\begin{figure}[h!]
\begin{tikzpicture}[thick, scale =2]
\filldraw[fill= black!15!white] (0,0) node [anchor = north]{$x_1$} -- (.5, .866) -- (.5, .866) node [anchor = south]{$x_2$} -- (1,0) node [anchor = north]{$x_3$}--  cycle;
\draw (.5, .866) -- (1.5, .866) node [anchor = south]{$x_4$} -- (1,0);
\end{tikzpicture}
\caption{\label{RunningExampleFig} The simplicial complex with facets $\{x_1x_2x_3, x_2x_4, x_3x_4\}$.}
\end{figure}
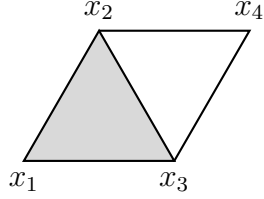
Let $\chi$ be the colouring of the vertices given by the partition $\{x_1, x_4\} \cup \{x_2\} \cup \{x_3\}$.  Then 
\[
 \Delta_\chi = \langle y_1y_2y_3, x_1y_2y_3, x_2y_1y_3, x_3y_1y_2, x_4y_2y_3, x_1x_2y_3, x_2x_3y_1, x_1x_3y_2, x_2x_4y_3, x_3x_4y_2, x_1x_2x_3\rangle.  
\]
\end{example}

\begin{remark} Observe that each $s$-colouring $\chi$ of $\Delta$ creates a new 
simplicial complex $\Delta_{\chi}$.  As we shall see, even though these simplicial
complexes $\Delta_{\chi}$ may be different, they all share some interesting properties,
regardless of how $\Delta$ is coloured.   
\end{remark}

\begin{remark} Construction \ref{DeltaChi} was recently introduced independently
by Frohmader \cite[Construction 7.1]{F}.  However, the construction appears 
in earlier work of Bj\"orner, Frankl, Stanley \cite{BFS} (e.g., see the proof in the 
Section 5 when when $a = (1,\ldots,1)$).  Another variation appears in work of Hetyei
(see \cite[Definition 4.2]{H}).
\end{remark}

We now prove some properties about our new complex $\Delta_{\chi}$.  

\begin{theorem} \label{correspondence}
The facets of $\Delta_\chi$ are in one-to-one 
correspondence with the faces of the original simplicial complex $\Delta$.  In addition $\Delta_\chi$ is pure of dimension $s-1$ and balanced.
\end{theorem}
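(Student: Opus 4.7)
The plan is to exhibit an explicit bijection between faces of $\Delta$ and facets of $\Delta_\chi$, then read off purity, dimension, and balancedness as essentially free corollaries. The key observation to have in hand from the outset is that any face $\sigma$ of $\Delta$ meets each colour class $V_j$ in at most one vertex: this follows because $\sigma$ is contained in some facet $F_i$, and the definition of an $s$-colouring requires $|F_i\cap V_j|\le 1$.

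For the bijection, I would define a map $\Phi$ sending a face $\sigma \in \Delta$ to $\Phi(\sigma) = \sigma \cup \{y_j : \sigma\cap V_j = \emptyset\}$, and verify that $\Phi(\sigma)$ is a facet of $\Delta_\chi$. First it lies in $\Delta_\chi$ by construction. For maximality, I would check two cases for any vertex $v$ one might try to add: if $v = x_i$ with $\sigma\cup\{x_i\}\in\Delta$, then the colour class $V_j$ containing $x_i$ satisfies $\sigma\cap V_j=\emptyset$ (by the observation above), hence $y_j$ already lies in $\Phi(\sigma)$ and $(\sigma\cup\{x_i\})\cap V_j\ne\emptyset$ forbids $y_j$; if $v = y_k \notin \Phi(\sigma)$, then $\sigma\cap V_k\ne\emptyset$, which forbids $y_k$ directly. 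Conversely, given any facet $F$ of $\Delta_\chi$, I would set $\sigma = F\cap\{x_1,\dots,x_n\}$ and $\tau = F\cap\{y_1,\dots,y_s\}$, note that $\sigma\in\Delta$, and use maximality of $F$ to conclude $\tau = \{y_j:\sigma\cap V_j=\emptyset\}$; this shows $F = \Phi(\sigma)$ and gives injectivity as well.

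Purity and the dimension claim then follow by a cardinality count: if $\sigma$ meets exactly $k$ of the colour classes $V_j$, then $|\sigma|=k$ (by the one-vertex-per-class observation) and $|\{y_j:\sigma\cap V_j=\emptyset\}|=s-k$, so $|\Phi(\sigma)|=s$ for every face $\sigma$ of $\Delta$. Hence every facet of $\Delta_\chi$ has dimension $s-1$. For balancedness, I would exhibit the explicit $s$-colouring with classes $V_j' := V_j\cup\{y_j\}$ for $1\le j\le s$. This is a partition of the vertex set of $\Delta_\chi$, and each facet $\Phi(\sigma)$ contains exactly one vertex from each $V_j'$, namely either the (unique) element of $\sigma\cap V_j$ or else $y_j$.

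I do not expect any single step to be a serious obstacle; the only place that requires a bit of care is checking that $\Phi(\sigma)$ is maximal, because one has to separately handle attempts to add an $x$-vertex versus a $y$-vertex and invoke the colouring hypothesis in the former case. Once the bijection is established, the purity, dimension, and balancedness statements follow from the same single count of colour classes met by $\sigma$.
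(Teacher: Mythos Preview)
Your proposal is correct and follows essentially the same approach as the paper: the paper identifies the facets as exactly the sets $\sigma \cup \{y_j : V_j \cap \sigma = \emptyset\}$ for $\sigma \in \Delta$, and then uses the same colouring $V_j' = V_j \cup \{y_j\}$ to read off purity, dimension $s-1$, and balancedness. The only difference is that the paper declares the characterisation of facets ``clear from the definition,'' whereas you spell out the maximality check and the inverse map explicitly; your added detail is sound (one minor omission: when adding an $x$-vertex you should also note the trivial case $\sigma\cup\{x_i\}\notin\Delta$, but this is immediate).
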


\begin{proof}
Let $V = V_1\cup \cdots \cup V_s$ be the colouring $\Delta$ given by $\chi$.
It is clear from the definition of $\Delta_\chi$ that the maximal faces are those of the form $\sigma \cup \{y_j ~|~ V_j \cap \sigma = \emptyset\}$ where $\sigma$ is a face of $\Delta$.  This establishes the one-to-one correspondence between the faces of $\Delta$ and the facets of $\Delta_\chi$.  

If we partition the vertices of 
$\Delta_\chi$ as
\[
\{x_1, \dots, x_n, y_1, \dots, y_s\} = V_1' \cup V_2' \cup \dots \cup V_s'
\]
where $V_j' = V_j \cup \{y_j\}$, then this partition gives
an $s$-colouring of $\Delta_{\chi}$.   We can see from the characterization of the facets of $\Delta_\chi$ that each 
facet contains exactly one vertex from each 
of the sets $V_1', \dots, V_s'$, and hence $\Delta_\chi$ is pure of dimension $s-1$ as well as balanced.\end{proof}

\begin{example}
Let $\Delta = \langle x_1x_2x_3, x_2x_4, x_3x_4\rangle$ and $\chi$ the colouring given by $\{x_1, x_4\} \cup \{x_2\} \cup \{x_3\}$ (see Example \ref{RunningExample}).  The faces of $\Delta$ are 
\[
\Delta = \{\emptyset, x_1, x_2, x_3, x_4, x_1x_2, x_2x_3, x_1x_3, x_2x_4, x_3x_4, x_1x_2x_3\}.
\]  
These are in one-to-one correspondence with the facets of $\Delta_\chi$:
\begin{align*}
 \Delta_\chi = &\langle y_1y_2y_3, x_1y_2y_3, x_2y_1y_3, x_3y_1y_2, x_4y_2y_3, x_1x_2y_3, x_2x_3y_1, x_1x_3y_2, x_2x_4y_3, x_3x_4y_2, x_1x_2x_3\rangle.  
\end{align*}
\end{example}

We arrive at the main result of this section.

\begin{theorem} \label{vertex decomposable} 
For any simplicial complex $\Delta$, and any $s$-colouring $\chi$ of $\Delta$, 
the simplicial complex $\Delta_\chi$ is vertex decomposable.
\end{theorem}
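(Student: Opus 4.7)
The plan is to proceed by induction on the number of vertices $n$ of $\Delta$. The base case $n=0$ gives $\Delta = \{\emptyset\}$, so $\Delta_\chi$ is the full simplex on $\{y_1,\ldots,y_s\}$ and is trivially vertex decomposable. For the inductive step, I pick any vertex $x \in \Delta$, say with $x \in V_\ell$, and aim to show that $x$ is a shedding vertex by identifying both $\mathrm{link}_{\Delta_\chi}(x)$ and $\Delta_\chi \setminus x$ with complexes of the form $(\Delta')_{\chi'}$ where $\Delta'$ has strictly fewer vertices than $\Delta$. Once this is done, the inductive hypothesis applies, and purity of both subcomplexes follows from Theorem \ref{correspondence}.

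For the link, I will show that a face $\tau$ of $\mathrm{link}_{\Delta_\chi}(x)$ decomposes as $\sigma'' \cup \tau'$ with $\sigma'' \in \mathrm{link}_\Delta(x)$ and $\tau' \subseteq \{y_1,\ldots,y_s\}\setminus\{y_\ell\}$ (the exclusion of $y_\ell$ is forced because $x \in V_\ell$), and that the defining compatibility conditions translate directly. The upshot will be
\[
\mathrm{link}_{\Delta_\chi}(x) = \bigl(\mathrm{link}_\Delta(x)\bigr)_{\chi_1},
\]
where $\chi_1$ is the $(s-1)$-colouring of $\mathrm{link}_\Delta(x)$ induced by the classes $V_1,\ldots,V_{\ell-1},V_{\ell+1},\ldots,V_s$; this is well-defined because the colouring axiom $|V_\ell\cap F|\leq 1$ forces every vertex of $\mathrm{link}_\Delta(x)$ to lie outside $V_\ell$.

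For the deletion, the claim is $\Delta_\chi \setminus x = (\Delta \setminus x)_{\chi_2}$, where $\chi_2$ is the restriction of $\chi$ to $\Delta\setminus x$. The inclusion $\supseteq$ is immediate from the construction. For $\subseteq$, the key point is that every facet of $\Delta_\chi$ containing $x$ has the form $\sigma \cup \{y_j : j\neq \ell,\; V_j\cap\sigma=\emptyset\}$ for some $\sigma\in\Delta$ with $x\in\sigma$; deleting $x$ produces a face that can always be enlarged by appending $y_\ell$, because the colouring condition forces $V_\ell\cap(\sigma\setminus\{x\})=\emptyset$. The enlarged face is precisely the facet of $\Delta_\chi$ corresponding to $\sigma\setminus\{x\}\in\Delta\setminus x$, and therefore belongs to $(\Delta\setminus x)_{\chi_2}$. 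Hence every face of $\Delta_\chi\setminus x$ sits inside a facet of $(\Delta\setminus x)_{\chi_2}$.

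The main obstacle is establishing the equality for the deletion; one must verify that removing $x$ creates no genuinely new facets in $\Delta_\chi\setminus x$ beyond those of $(\Delta\setminus x)_{\chi_2}$, and this is precisely where the colouring axiom $|V_\ell\cap\sigma|\leq 1$ is used in an essential way (to guarantee $y_\ell$ can always be appended after deleting $x$). Once the two identifications are in hand, both $\mathrm{link}_{\Delta_\chi}(x)$ and $\Delta_\chi\setminus x$ are vertex decomposable by the induction hypothesis, and the conclusion follows from the definition.
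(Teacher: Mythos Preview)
Your proof is correct and follows essentially the same inductive strategy as the paper: induct on $|V(\Delta)|$, pick any $x$-vertex as shedding vertex, and identify both $\mathrm{link}_{\Delta_\chi}(x)$ and $\Delta_\chi\setminus x$ with complexes of the form $(\Delta')_{\chi'}$ on fewer vertices. The only cosmetic difference is that for the link you use an $(s-1)$-colouring (dropping the colour class $V_\ell$), whereas the paper keeps all $s$ colour classes with $W_\ell=\emptyset$; your identification is in fact slightly cleaner, since it matches $\mathrm{link}_{\Delta_\chi}(x)$ exactly rather than up to the cone vertex $y_\ell$.
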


\begin{proof}  Let $\Delta$ be a simplicial complex on the vertex set 
$\{x_1, \dots, x_n\}$.  We proceed by induction on $n$.  

If $\Delta$ is the simplicial complex consisting of a 
single vertex $x_1$, then the only possible colourings of the
vertices of $\Delta$ are of the form $V = V_1\cup \dots \cup V_s$ where $V_1 = \{x_1\}$ and $V_2, \dots, V_s$ are empty.  In this case $\Delta_\chi = \langle x_1y_2\dots y_s, y_1y_2\dots y_s\rangle$.  This is vertex decomposable since $\Delta_\chi \setminus x_1 = \langle y_1y_2\dots y_s\rangle$ and ${\rm link}_{\Delta_\chi}(x_1) = \langle y_2\dots y_s\rangle$ are both simplices.

Now suppose that $\Delta$ is a simplicial complex 
on the vertex set $V = \{x_1, \dots, x_n\}$, and let $\chi$ be 
the $s$-colouring of $\Delta$ given by $V = V_1 \cup \cdots \cup V_s$.  
We will  show that we can decompose $\Delta_\chi$ by decomposing at any 
vertex $x_i$.   Let $g_1,\dots, g_t$ be the faces of $\Delta$ and define 
$g_i' = \{y_j ~|~ V_j \cap g_i = \emptyset\}$.  
So $g_1 \cup g_1', \dots, g_t \cup g_t'$ are the facets of $\Delta_\chi$.

We must show that both $\Delta_\chi \setminus x_i$ and ${\rm link}_{\Delta_\chi}(x_i)$ are 
vertex decomposable.  First consider the deletion.  We may assume that the facets of 
$\Delta_\chi$ are ordered so that the facets $g_1 \cup g'_1, \dots, g_r \cup g'_r$ do not 
contain the vertex $x_i$ and the facets $g_{r+1}\cup g'_{r+1}, \dots, g_t \cup g'_t$ do 
contain $x_i$.  So
\[
\Delta \setminus x_i 
= \{\mbox{faces of } \Delta \mbox{ which do not contain } x_i\} = \{g_1, \dots, g_r\}.
\]
Note that we are using the fact that $g_1,\ldots,g_r,g_{r+1},
\ldots,g_t$ is a complete list of the faces of $\Delta$ by Theorem \ref{correspondence}.

Without loss of generality we may assume that $x_i \in V_1$.   Then 
$V \setminus \{x_i\} = (V_1 \setminus \{x_i\})  
\cup V_2 \cup \dots \cup V_s$ is an $s$-colouring of $\Delta \setminus x_i$.  Call this $s$-colouring $\chi'$. 
 Then
 $(\Delta \setminus x_i)_{\chi'} = \langle (g_1 \cup g'_1), 
\ldots, (g_r \cup g'_r)\rangle = \Delta_\chi \setminus x_i$. 
 Since $\Delta \setminus x_i$ is a simplicial complex on 
fewer than $n$ vertices,  $(\Delta \setminus x_i)_{\chi'}$ is vertex decomposable.

Now consider the link.  Since $(g_{r+1} \cup g'_{r+1}), \dots, (g_t \cup g'_t)$ 
are the facets of $\Delta_\chi$ which contain $x_i$, 
\begin{align*}
{\rm link}_{\Delta_\chi}(x_i) &= \langle (g_{r+1}\cup g'_{r+1}) \setminus \{x_i\}, \dots, (g_t \cup g'_t) \setminus \{x_i\} \rangle\\ 
&= \langle ((g_{r+1}\setminus \{x_i\}) \cup g'_{r+1}), \dots, ((g_t \setminus \{x_i\}) \cup g'_t) \rangle \, . 
\end{align*}
For each $1 \leq j \leq s$, set 
$W_j = \{x_\ell \in V_j ~|~ x_\ell \in {\rm link}_\Delta (x_i)\}$.  Note that some of these sets 
may be empty.  Then $W = W_1\cup \dots \cup W_s$ is an $s$-colouring of 
${\rm link}_{\Delta}(x_i)$.  We call this $s$-colouring $\chi''$.  Then
\[
({\rm link}_{\Delta}(x_i))_{\chi''} = {\rm link}_{\Delta_\chi}(x_i)
\]
and by induction $({\rm link}_{\Delta}(x_i))_{\chi''}$ is vertex decomposable.
\end{proof}

The fact that $\Delta_{\chi}$ is vertex decomposable has a number
of consequences.

\begin{definition} A pure $d$-dimensional simplicial complex $\Delta$ is \emph{shellable} if there is an ordering $F_1,\ldots,F_s$ on the facets of $\Delta$
such that for all $1 \leq i <  j \leq s$ there exists some $v \in F_j \setminus F_i$ and some $\ell \in \{1, \dots, j-1\}$ with $F_j \setminus F_\ell = \{v\}$.  Such an ordering on the facets is called a \emph{shelling order}.
\end{definition}

\begin{corollary}\label{shellingOrder} 
For any simplicial complex $\Delta$, and any $s$-colouring $\chi$ of $\Delta$,
the simplicial complex $\Delta_\chi$ is shellable, and thus, Cohen-Macaulay.
Moreover, any order of the facets of $\Delta_\chi$ which refines the 
order given by ordering the faces of $\Delta$ by increasing dimension is a shelling order.
\end{corollary}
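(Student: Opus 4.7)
The first assertion is immediate from the theory: it is a standard result of Provan and Billera \cite{BP} that any pure vertex decomposable complex is shellable, and any shellable complex is Cohen-Macaulay. Combined with Theorem \ref{vertex decomposable}, this gives shellability and Cohen-Macaulayness of $\Delta_\chi$ at no extra cost.

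For the second assertion, I would work directly with the bijection of Theorem \ref{correspondence}. Recall that every facet of $\Delta_\chi$ has the form $F(g) = g \cup g'$ where $g$ is a face of $\Delta$ and $g' = \{y_j \mid V_j \cap g = \emptyset\}$. Enumerate the faces of $\Delta$ as $g_1, g_2, \ldots, g_t$ in any order refining $\dim g_1 \le \dim g_2 \le \cdots \le \dim g_t$, and let $F_i = F(g_i)$. I would fix $i < j$ and produce the required $v$ and $\ell$ as follows. Since $i < j$ we have $|g_i| \le |g_j|$ and $g_i \ne g_j$, which forces $g_j \not\subseteq g_i$; pick any $v \in g_j \setminus g_i$. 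Since $v$ is an $x$-variable and $g_i'$ consists only of $y$-variables, $v \notin F_i$, so $v \in F_j \setminus F_i$.

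Now I would take $\ell$ to be the index with $g_\ell = g_j \setminus \{v\}$ (this is a face of $\Delta$, hence appears in the list). Letting $p$ be the color class with $v \in V_p$, the colouring condition gives $V_p \cap g_j = \{v\}$, hence $V_p \cap g_\ell = \emptyset$; meanwhile for every other colour $k \ne p$, $V_k \cap g_\ell = V_k \cap g_j$. Therefore $g_\ell' = g_j' \cup \{y_p\}$, and so
\[
F_\ell = (g_j \setminus \{v\}) \cup g_j' \cup \{y_p\} = (F_j \setminus \{v\}) \cup \{y_p\}.
\]
This yields $F_j \setminus F_\ell = \{v\}$, as desired. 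Finally, since $\dim g_\ell = \dim g_j - 1$, the ordering by increasing dimension places $g_\ell$ strictly before $g_j$, so $\ell < j$.

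There is no real obstacle here; the only point that needs mild care is verifying $g_\ell' = g_j' \cup \{y_p\}$, which is precisely where the hypothesis that $\chi$ is a valid colouring (so $|V_p \cap g_j| \le 1$) is used. The whole argument is essentially a clean bookkeeping calculation once the bijection between faces of $\Delta$ and facets of $\Delta_\chi$ is in hand.
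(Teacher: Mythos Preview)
Your proof is correct and follows essentially the same approach as the paper: both deduce shellability from Theorem \ref{vertex decomposable} via Provan--Billera, and both verify the explicit shelling order by picking $v \in g_j \setminus g_i$, setting $g_\ell = g_j \setminus \{v\}$, and computing $F_j \setminus F_\ell = \{v\}$. If anything, your write-up is slightly more careful in justifying why $v$ may be chosen among the $x$-vertices and in spelling out the identity $g_\ell' = g_j' \cup \{y_p\}$ from the colouring condition.
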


\begin{proof}
By Theorem \ref{vertex decomposable}, $\Delta_\chi$ is vertex decomposable, so by \cite[Corollary 2.9]{BP} it is also shellable, and consequently, Cohen-Macaulay 
(e.g, see \cite[Theorem 8.2.6]{HH}). 

For the rest, let $F_1, \dots F_s$ be the facets of $\Delta_\chi$.  
By Theorem \ref{correspondence}, each $F_i = g_i \cup g'_i$  
where $g_i$ is a face of $\Delta$ and 
$g'_i = \{y_j ~|~ V_j \cap g_i = \emptyset\}$.  
We order the facets $F_1, \dots F_s$ so that $\dim g_i \leq \dim g_j$ if $i <j$.  We
now show that this is a shelling order.

Let $F_i$, $F_j$ be any two distinct facets 
of $\Delta_\chi$ with $i <j$.  Since $i<j$, we have $\dim g_i \leq \dim g_j$ and so there is some $x_u \in F_j \setminus F_i$.  Since $g_j \setminus \{x_u\}$ is a face of $\Delta$ we have $g_j \setminus \{x_u\} = g_\ell$ for some $\ell$, and since $\dim g_\ell < \dim g_j$ we have $\ell < j$.
Since $g_\ell = g_j \setminus \{x_u\}$ we must have $g'_\ell = g'_j \cup \{y_w\}$ where $x_u \in V_w$.  Then 
\[F_j \setminus F_\ell = (g_j \cup g'_j) \setminus (g_\ell \cup g'_\ell) = (g_j \cup g'_j) \setminus ((g_j \setminus \{x_u\}) \cup (g'_j \cup \{y_w\})) = \{x_u\} \, .\]
Thus our ordering is a shelling order.
\end{proof}

Results of Villarreal \cite{V}, Dochtermann and Engstr\"om \cite{DE}, and
Cook and Nagel \cite{CN} on the independence complexes of graphs now become
special cases of Theorem \ref{vertex decomposable}.   We first recall the relevant
terminology.  This terminology will be also be used in Section 5.

Let $G = (V_G,E_G)$ be a finite simple graph on the vertex set
$V_G = \{x_1,\ldots,x_n\}$ and edge set $E_G$.  One can use the independent
sets of $G$ to define a simplicial complex.

\begin{definition}
A subset $W \subseteq V_G$ is an {\it independent set} of a graph $G$ if for
every edge $e \in E_G$, we have $e \not\subseteq W$.  A set $W$ is a {\it maximal
independent} set if $W$ is an independent set, but is not a proper
subset of any other independent set of $G$.
\end{definition}

\begin{definition} Let $G$ be a graph.  The {\it independence complex} of $G$, denoted
${\rm Ind}(G)$, is the simplicial complex defined by 
\[{\rm Ind}(G) = \{W \subseteq V_G ~|~ \mbox{$W$ is an independent set of $G$}\}.\]
The independence complex ${\rm Ind}(G)$ is sometimes called a {\it flag complex}.
\end{definition}

\begin{definition} The {\it clique} of order $n$, denoted
$K_n$, is the graph with vertex set $V = \{x_1,\ldots,x_n\}$
and edge set $\{ \{x_i,x_j\} ~|~ 1 \leq i < j \leq n\}$.  Note that an isolated vertex
can be viewed as $K_1$.
\end{definition}

Given any graph $G$ and any subset $S \subseteq V_G$, 
the {\it induced graph on $S$}, denoted $G|_S$, is the graph with
vertex set $S$ and edge set $E_{G|_s} = \{e \in E_G ~|~ e \subseteq S\}$.

\begin{definition}
Let $G = (V_G,E_G)$ be a finite simple graph.  A {\it clique partition} of $V_G$
is a partition of $V_G = V_1 \cup V_2 \cup \cdots \cup V_s$ such that
each induced graph $G|_{V_i}$ is a clique.
\end{definition}

\begin{construction}[Cook-Nagel]\label{cliquewhisker}  Let $\pi$ denote a clique
partition $V_G = V_1 \cup \cdots \cup V_s$ for a finite simple graph
$G = (V_G,E_G)$.  From $G$ and $\pi$, let $G^{\pi}$ denote the
finite simple graph on the vertex set $V_{G^{\pi}} = V_G \cup \{y_1,\ldots,y_s\}$
and edge set 
\[E_{G^{\pi}} = E_G \cup \bigcup_{i=1}^{s} \{ \{x,y_i\} ~|~ x \in V_i\}.\]
In other words, add a new vertex for each partition $V_i$, and join this
new vertex to every vertex in $V_i$.  We call $G^{\pi}$ a \emph{clique whiskering}
of $G$.
\end{construction}

\begin{corollary}[{\cite[Theorem 3.3]{CN}}]
Let $G$ be a graph, and let $G^{\pi}$ denote the clique-whiskered graph.
Then ${\rm Ind}(G^{\pi})$ is vertex decomposable. 
\end{corollary}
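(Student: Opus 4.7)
The plan is to identify ${\rm Ind}(G^{\pi})$ with the construction $\Delta_{\chi}$ of the paper, applied to $\Delta = {\rm Ind}(G)$ and an $s$-colouring $\chi$ induced by $\pi$. Once this identification is made, Theorem \ref{vertex decomposable} gives the result immediately.

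First I would verify that the clique partition $\pi: V_G = V_1 \cup \cdots \cup V_s$ naturally gives an $s$-colouring $\chi$ of $\Delta = {\rm Ind}(G)$. The facets of ${\rm Ind}(G)$ are the maximal independent sets of $G$. Since each $V_i$ induces a clique, any independent set of $G$ can contain at most one vertex from each $V_i$. In particular, $|F \cap V_i| \leq 1$ for every facet $F$ of ${\rm Ind}(G)$ and every $i$, so $\chi: V_G = V_1 \cup \cdots \cup V_s$ is indeed an $s$-colouring of ${\rm Ind}(G)$ in the sense of the paper.

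Next I would show the equality of simplicial complexes
\[
{\rm Ind}(G^{\pi}) = {\rm Ind}(G)_{\chi}
\]
on the common vertex set $V_G \cup \{y_1, \dots, y_s\}$. For the inclusion ``$\subseteq$'', take $W \in {\rm Ind}(G^{\pi})$ and write $W = \sigma \cup \tau$ with $\sigma = W \cap V_G$ and $\tau = W \cap \{y_1, \dots, y_s\}$. Since $E_G \subseteq E_{G^{\pi}}$, the set $\sigma$ is independent in $G$, hence $\sigma \in {\rm Ind}(G)$. Moreover, if $y_j \in \tau$ and some $x \in \sigma \cap V_j$, then $\{x, y_j\} \in E_{G^{\pi}}$ by the definition of Construction \ref{cliquewhisker}, contradicting the independence of $W$. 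So $\sigma \cap V_j = \emptyset$ for each $y_j \in \tau$, placing $W$ in ${\rm Ind}(G)_{\chi}$. Conversely, any face $\sigma \cup \tau$ of ${\rm Ind}(G)_{\chi}$ avoids the edges of $G$ (since $\sigma$ is independent) and avoids the new whisker-edges $\{x, y_j\}$ with $x \in V_j$ (by the colouring condition), so it is independent in $G^{\pi}$.

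With this identification, Theorem \ref{vertex decomposable} applied to $\Delta = {\rm Ind}(G)$ and the $s$-colouring $\chi$ yields that ${\rm Ind}(G^{\pi}) = {\rm Ind}(G)_{\chi}$ is balanced and vertex decomposable, which is the claim. No step here is a serious obstacle; the only thing to be careful about is the direction of the colouring condition versus the edge condition in $G^{\pi}$, which is precisely what makes the two definitions match up.
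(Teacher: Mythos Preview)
Your proposal is correct and follows essentially the same approach as the paper: both proofs observe that the clique partition $\pi$ is an $s$-colouring of ${\rm Ind}(G)$, identify ${\rm Ind}(G^{\pi})$ with ${\rm Ind}(G)_{\pi}$, and then invoke Theorem~\ref{vertex decomposable}. Your version is slightly more explicit in checking both inclusions for the identification ${\rm Ind}(G^{\pi}) = {\rm Ind}(G)_{\chi}$, whereas the paper simply asserts the description of the faces of ${\rm Ind}(G^{\pi})$, but the argument is the same.
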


\begin{proof}  Let $\pi$ be the clique partition $V_G = V_1 \cup \cdots \cup V_s$.
Then the faces of ${\rm Ind}(G^{\pi})$ have the form $\sigma \cup \tau$ where 
$\sigma$ is an independent set of $G$, i.e. $\sigma \in {\rm Ind}(G)$, $\tau$
is a subset of $\{y_1,\ldots,y_s\}$, and if
 $y_j \in \tau$, then $\sigma \cap V_j = \emptyset$.
It now suffices to note that $\pi$ is also an $s$-colouring of ${\rm Ind}(G)$,
from which it will follow that
${\rm Ind}(G^{\pi}) = {\rm Ind}(G)_{\pi}$.  Indeed, for any facet
$F \in {\rm Ind}(G)$, we must have $|F \cap V_i| \leq 1$ since $F$ is an
independent set but all vertices of $V_i$ are adjacent since $G|_{V_i}$ is a clique. 
\end{proof}

\begin{remark}
Villarreal \cite{V} first introduced Construction \ref{cliquewhisker} in the special case that
the partition $\pi$ was $V_G = \{x_1\} \cup \{x_2\} \cup \cdots \cup \{x_n\}$.
For this partition $\pi$,  it was shown in 
\cite[Theorem 4.4]{DE} that ${\rm Ind}(G^{\pi})$
was vertex decomposable.
\end{remark}


\section{$h$-vectors and algebraic consequences}\label{algebra}

In this section, we explore some consequences
of Theorem \ref{vertex decomposable}.  In particular, we show
that any $f$-vector of a simplicial complex is also the $h$-vector
of a balanced, vertex decomposable simplicial complex.
This enables us to give a new characterization of $f$-vectors
of simplicial complexes, which extends Bj\"orner, Frankl and Stanley's \cite{BFS}
characterization.  We also show 
that for any $f$-vector $f(\Delta)$, there exists a
square-free monomial ideal
with a linear resolution whose graded Betti numbers are a function 
of $f(\Delta)$.
We relate this idea to recent work of Herzog,
Sharifan, and Varbaro \cite{HSV}.

We begin by recalling the definition of an $h$-vector.

\begin{definition} The $h$-vector $(h_0, h_1, \dots, h_{d+1})$ of a $d$-dimensional 
simplicial complex $\Delta$, denoted $h(\Delta)$
is defined in terms of the $f$-vector $f(\Delta) = (f_{-1},f_0,\ldots,f_d)$ as follows
\[
h_k = \sum_{i = 0}^k (-1)^{k-i}{d-i \choose k-i}f_{i-1}(\Delta) \, .
\]
\end{definition}

We can use Corollary \ref{shellingOrder} to give a proof of the following
result:

\begin{theorem}  \label{hvector}
The following containment of sets holds:
\[\left\{
\begin{array}{c}
\mbox{$f$-vectors of } \\
\mbox{simplicial complexes}
\end{array}
\right\} \subseteq
\left\{
\begin{array}{c}
\mbox{$h$-vectors of balanced}\\
\mbox{vertex decomposable simplicial complexes}
\end{array}\right\}.\]
\end{theorem}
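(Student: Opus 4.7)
The plan is to prove the containment by exhibiting, for every simplicial complex $\Delta$, a balanced vertex decomposable complex whose $h$-vector equals $f(\Delta)$; the natural candidate is $\Delta_\chi$ from Construction \ref{DeltaChi}. Any $\Delta$ admits an $s$-colouring (take for instance the trivial colouring in which each vertex forms its own colour class), and Theorems \ref{correspondence} and \ref{vertex decomposable} then guarantee that $\Delta_\chi$ is balanced and vertex decomposable. Hence the theorem reduces to the single combinatorial claim that $h_k(\Delta_\chi) = f_{k-1}(\Delta)$ for every $k$.

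The first step is to compute $f(\Delta_\chi)$ in terms of $f(\Delta)$. Every face of $\Delta_\chi$ decomposes uniquely as $\sigma \cup \tau$ with $\sigma$ a face of $\Delta$ and $\tau \subseteq \{y_j : V_j \cap \sigma = \emptyset\}$. The colouring condition forces a face $\sigma$ of size $j$ to meet exactly $j$ of the colour classes, so there are precisely $\binom{s-j}{i-j}$ admissible choices of $\tau$ of size $i-j$. This gives
\[
f_{i-1}(\Delta_\chi) \;=\; \sum_{j=0}^{i} \binom{s-j}{i-j}\, f_{j-1}(\Delta).
\]

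The second step is to substitute this formula into the definition of $h_k(\Delta_\chi)$, using $\dim \Delta_\chi = s-1$, then interchange the order of summation and re-index by $l = i-j$. The calculation collapses to evaluating the inner sum
\[
\sum_{l=0}^{k-j} (-1)^{k-j-l} \binom{s-j-l}{k-j-l} \binom{s-j}{l},
\]
and this is where the combinatorial heart of the argument sits. I would evaluate it by recognising it as the coefficient of $x^{k-j}$ in the expansion $\sum_l \binom{s-j}{l} x^l (1-x)^{s-j-l} = (x+(1-x))^{s-j} = 1$, so the sum equals $1$ when $k=j$ and $0$ otherwise. Therefore $h_k(\Delta_\chi) = f_{k-1}(\Delta)$ for every $k$; since $f_{k-1}(\Delta) = 0$ whenever $k-1$ exceeds $\dim \Delta$, the $h$-vector of $\Delta_\chi$ equals $f(\Delta)$ (padded by trailing zeros when $s > \dim \Delta + 1$), establishing the desired containment. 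The only non-routine step is the binomial identity, and the generating-function argument above dispatches it in a line.
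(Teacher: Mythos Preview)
Your proof is correct and follows a genuinely different route from the paper's.  The paper establishes $h(\Delta_\chi)=f(\Delta)$ by exploiting the shelling order of Corollary \ref{shellingOrder}: it uses the interval partition $\Delta_\chi=\bigcup_i[\mathcal R(F_i),F_i]$ associated to a shelling, together with the standard fact that $h_i$ counts the facets with $|\mathcal R(F_j)|=i$, and then observes that $\mathcal R(F_j)=g_j$ so that $h_i$ coincides with the number of $(i-1)$-faces of $\Delta$.  You instead compute $f(\Delta_\chi)$ directly from the description of the faces in Construction \ref{DeltaChi}, substitute into the defining relation for the $h$-vector, and collapse the double sum via the generating-function identity $\sum_l \binom{N}{l} x^l(1-x)^{N-l}=1$.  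Your approach is more elementary in that the identity $h(\Delta_\chi)=f(\Delta)$ is obtained without ever invoking shellability (Theorems \ref{correspondence} and \ref{vertex decomposable} are used only to certify that $\Delta_\chi$ is balanced and vertex decomposable, not to compute its $h$-vector).  The paper's approach, on the other hand, is more conceptual and avoids any binomial manipulation by reading off the $h$-vector from the restriction sets of the shelling.  Both arguments handle the trailing-zero issue in the same way when $s>\dim\Delta+1$.
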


\begin{proof}
Let $f(\Delta)$ be the $f$-vector of a simplicial complex $\Delta$.
For any $s$-colouring $\chi$ of $\Delta$, 
$\Delta_\chi$ is a balanced vertex decomposable simplicial complex
by Theorems \ref{correspondence} and \ref{vertex decomposable}, and
thus shellable.  We will show the $h$-vector of $\Delta_{\chi}$ is $f(\Delta)$.

If $F_1,\ldots, F_s$ are the facets of $\Delta_{\chi}$, then
by Theorem \ref{correspondence}, each $F_i = g_i \cup g'_i$  
where $g_i$ is a face of $\Delta$ and 
$g'_i = \{y_j ~|~ V_j \cap g_i = \emptyset\}$.  Moreover, by Corollary
\ref{shellingOrder}, we have a shelling if 
we order the facets $F_1, \dots F_s$ so that $\dim g_i \leq \dim g_j$ if $i <j$. 

Because we have a shelling, \cite[Proposition 8.2.7]{HH} allows us to 
construct the following partition of $\Delta_{\chi}$:
\[\Delta_{\chi} = \bigcup_{i=1}^s[\mathcal{R}(F_i),F_i].\]
Here, $[G,F]$ is an interval, i.e., $[G,F] = \{H \in \Delta_{\chi} ~|~ G \subseteq H \subseteq F\}$
and 
\[\mathcal{R}(F_i) = \{z \in F_i ~|~ F_i \setminus \{z\} \in \langle F_1,\ldots,F_{i-1} \rangle\}
.\]
By \cite[Proposition 2.3]{St}, the $h$-vector of $\Delta_{\chi}$ satisfies
\[h_i = |\{j ~|~ |\mathcal{R}(F_j)| = i \}|~~\mbox{for $i=0,\ldots,d+1$.}\]
The conclusion now follows from the fact that $\mathcal{R}(F_j) = g_j$, so $h_i$ counts the
number of faces of dimension $i-1$ in $\Delta$, whence 
$h(\Delta_{\chi}) =(h_0,\ldots,h_{d+1}) = (f_{-1},f_1,\ldots,f_d) = f(\Delta)$.
\end{proof}

Theorem \ref{hvector} allows us to add a new equivalent statement
to a theorem of Bj\"orner, Frankl, and Stanley \cite{BFS}.

\begin{theorem} \label{BFSThem}
Let $m = (m_1,\ldots,m_t) \in \mathbb{Z}_+^t$.  Then
the following are equivalent:
\begin{enumerate}
\item[$(i)$] $m$ is the $f$-vector of a simplicial complex.
\item[$(ii)$] $m$ is the $h$-vector of a balanced, vertex decomposable simplicial complex.
\item[$(iii)$] $m$ is the $h$-vector of a balanced, shellable simplicial complex.
\item[$(iv)$] $m$ is the $h$-vector of a balanced, Cohen-Macaulay simplicial complex.
\end{enumerate}
\end{theorem}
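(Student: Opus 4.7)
The plan is to close a cycle of implications $(i) \Rightarrow (ii) \Rightarrow (iii) \Rightarrow (iv) \Rightarrow (i)$, relying on Theorem \ref{hvector} for the only genuinely new implication and citing existing results for the rest. Since Bj\"orner, Frankl, and Stanley already established the equivalence of $(i)$, $(iii)$, and $(iv)$ in \cite{BFS}, the entire content of the theorem beyond their result is to insert $(ii)$ into the chain.

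First, for $(i) \Rightarrow (ii)$, I would simply invoke Theorem \ref{hvector}: given a simplicial complex $\Delta$ with $f(\Delta) = m$, pick any $s$-colouring $\chi$ of $\Delta$ (for instance the trivial $n$-colouring assigning each vertex its own colour class), form $\Delta_\chi$ via Construction \ref{DeltaChi}, and observe that $\Delta_\chi$ is balanced and vertex decomposable by Theorems \ref{correspondence} and \ref{vertex decomposable}, while $h(\Delta_\chi) = f(\Delta) = m$ by Theorem \ref{hvector}.

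Next, for $(ii) \Rightarrow (iii)$, I would note that the class of balanced simplicial complexes is the same under both hypotheses, so it suffices to recall the standard implication that vertex decomposable $\Rightarrow$ shellable, by \cite[Corollary 2.9]{BP}; this was already used in the proof of Corollary \ref{shellingOrder}. The $h$-vector and the balanced property depend only on the complex, not on which stronger combinatorial property we impose. For $(iii) \Rightarrow (iv)$, I would cite the fact that shellable simplicial complexes are Cohen-Macaulay (e.g., \cite[Theorem 8.2.6]{HH}), again noting that being balanced is preserved. Finally, for $(iv) \Rightarrow (i)$, I would cite directly the corresponding implication in the theorem of Bj\"orner, Frankl, and Stanley \cite{BFS}.

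There is no genuine obstacle here: the new implication $(i) \Rightarrow (ii)$ is the only step not already in the literature, and it is handled entirely by Theorem \ref{hvector}. The main thing to be careful about is to phrase the proof so that the cycle is explicit and to make it clear that the contribution of this theorem over \cite{BFS} is precisely the insertion of vertex decomposability as a strengthening of shellability on the combinatorial side of the classification.
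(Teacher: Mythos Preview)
Your proposal is correct and matches the paper's proof essentially line for line: Theorem \ref{hvector} for $(i)\Rightarrow(ii)$, the standard chain vertex decomposable $\Rightarrow$ shellable $\Rightarrow$ Cohen--Macaulay for $(ii)\Rightarrow(iii)\Rightarrow(iv)$, and the literature for $(iv)\Rightarrow(i)$. The only cosmetic difference is that the paper attributes $(iv)\Rightarrow(i)$ to Stanley \cite{S} rather than to \cite{BFS}.
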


\begin{proof}
Theorem \ref{hvector} gives $(i) \Rightarrow (ii)$.  The statements
$(ii) \Rightarrow (iii)$ and $(iii) \Rightarrow (iv)$ follow from
the implications:
\[\mbox{vertex decomposable} ~~ \Rightarrow \mbox{shellable} ~~ \Rightarrow 
\mbox{Cohen-Macaulay}.\]
Finally, $(iv) \Rightarrow (i)$ was first proved by Stanley \cite{S}.  
The equivalence of $(i)$, $(iii)$ and $(iv)$ were first shown
in \cite{BFS}, albeit in a much more general setting.
\end{proof}

It is natural to ask if Theorem \ref{BFSThem} still holds if we restrict
to smaller classes of simplicial complexes.  For example, 
it has been asked whether the above statements still hold if we 
replace an arbitrary simplicial complex with the class of flag complexes.
In particular,
Cook and Nagel \cite{CN}, and Constantinescu and Varbaro \cite{CV} have posited the 
following conjecture (the conjecture of Cook and Nagel does not include
the word balanced):

\begin{conjecture}  \label{hvectorconjecture}
The following equality of sets holds:
\[\left\{
\begin{array}{c}
\mbox{$f$-vectors of } \\
\mbox{flag complexes}
\end{array}
\right\} =
\left\{
\begin{array}{c}
\mbox{$h$-vectors of balanced}\\
\mbox{vertex decomposable flag complexes}
\end{array}\right\}\]
\end{conjecture}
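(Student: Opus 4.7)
The plan is to prove the two set inclusions separately. I expect the forward inclusion to be essentially automatic from the paper's machinery, and the reverse inclusion to require a new structural result whose proof is the central difficulty.

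For the forward inclusion, I would start with any flag complex $\Delta = {\rm Ind}(G)$ and first observe that a partition $V_G = V_1 \cup \cdots \cup V_s$ is an $s$-colouring of $\Delta$ if and only if each block $V_j$ is a clique of $G$. The ``if'' direction is immediate; for the converse, two non-adjacent vertices of a common $V_j$ would form an independent set, hence extend to a common facet of ${\rm Ind}(G)$, violating $|F \cap V_j| \le 1$. Given any such clique partition $\pi$, the clique-whiskered graph $G^\pi$ of Construction \ref{cliquewhisker} satisfies ${\rm Ind}(G^\pi) = \Delta_\pi$, so $\Delta_\pi$ is itself a flag complex. Theorems \ref{vertex decomposable} and \ref{hvector} then guarantee that $\Delta_\pi$ is balanced, vertex decomposable, and has $h$-vector $f(\Delta)$; the partition into singletons always supplies such a $\pi$, so this inclusion holds unconditionally.

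For the reverse inclusion, I would try to invert Construction \ref{DeltaChi}. Given a balanced vertex decomposable flag complex $\Delta'$ with balancing colour partition $C_1 \cup \cdots \cup C_s$, I would attempt to pick a distinguished vertex $y_j \in C_j$ in each class, set $V_j = C_j \setminus \{y_j\}$ and $V = V_1 \cup \cdots \cup V_s$, and define $\Delta = \{F \cap V : F \in \Delta'\}$ on $V$. The aim is to show $\Delta' = \Delta_\chi$ under the colouring $\chi$ with blocks $V_j$, which by Theorem \ref{hvector} would force $h(\Delta') = f(\Delta)$. Writing $\Delta' = {\rm Ind}(G')$, one would moreover need $\Delta = {\rm Ind}(G'|_V)$ so that $\Delta$ is itself flag.

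The main obstacle is that both of these conditions force $y_j$ to be a ``genuine whisker,'' namely $N_{G'}(y_j) = C_j \setminus \{y_j\}$. If every colour class of every balanced vertex decomposable flag complex contained such a vertex, the conjecture would follow at once. However this is a very strong structural requirement, and it is not obviously forced by balanced vertex decomposability alone. In the chordal setting one can exploit perfect elimination orderings of the complementary graph to produce such whisker vertices, which presumably underlies the chordal special case alluded to in the introduction; for the general conjecture one would need either a more flexible reverse construction (allowing $\Delta$ to be built from $\Delta'$ by some procedure other than restriction) or a structural theorem guaranteeing whisker-like vertices in every colour class. Identifying the right structural tool is, I believe, the heart of the problem.
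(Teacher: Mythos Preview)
This statement is a \emph{conjecture} in the paper, not a theorem: the paper does not prove it in full, and your proposal does not either (as you yourself acknowledge). So there is no ``paper's own proof'' to compare against for the full statement.

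Your forward inclusion is correct and matches exactly what the paper endorses: the paragraph following the conjecture points to \cite[Corollary~3.10]{CN} and \cite[Proposition~4.1]{CV} for precisely the argument you sketch (clique partition $\Rightarrow$ colouring $\Rightarrow$ clique-whisker $\Rightarrow$ $\Delta_\chi$ is a balanced vertex decomposable flag complex with the right $h$-vector).

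Your analysis of the reverse inclusion is also accurate: the obstruction you isolate --- finding a vertex $y_j$ in each colour class with $N_{G'}(y_j) = C_j \setminus \{y_j\}$ --- is exactly the content of the paper's ``facet restriction'' notion (Definition preceding Theorem~\ref{criterion}), and Theorem~\ref{criterion} is precisely the inversion lemma you are groping for. For the chordal special case (Section~5), however, the paper does \emph{not} use perfect elimination orderings of the complement as you guess. Instead it invokes the Herzog--Hibi--Zheng structure theorem (Theorem~\ref{HHZ}): for an unmixed chordal graph $G$, the facets of the clique complex $Cl(G)$ that contain a free vertex partition $V_G$, and the free vertices themselves form a facet $F$ of ${\rm Ind}(G)$ giving the required facet restriction. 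These free vertices are your ``whisker vertices.''

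In short: you have correctly located the gap, and the general conjecture remains open in the paper. Your only substantive inaccuracy is the mechanism behind the chordal case.
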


One can show that the containment of Theorem \ref{hvector} still holds
true for flag complexes.  We omit the proof here, but instead point the
reader to the proofs of \cite[Corollary 3.10]{CN}
and \cite[Proposition 4.1]{CV}.    The second proof is interesting
since the authors use basically the same construction as 
Construction \ref{DeltaChi}, but in the special case that
the colouring is given by the partition $V = \{x_1\} \cup \cdots \cup \{x_n\}$.  In some special
cases, e.g., bipartite graphs (see \cite{CV}),
the conjecture has been proved.  We add additional evidence for Conjecture
\ref{hvectorconjecture} when we prove the statement for the flag complexes
of chordal graphs in the next section.

We conclude this section by showing how to use Theorem \ref{hvector}
to find the graded Betti numbers of the Alexander dual of the Stanley-Reisner
ideal associated to $I_{\Delta_{\chi}}$.  
The
following well-known definition connects
simplicial complexes and monomial ideals.

\begin{definition} Given a simplicial complex $\Delta$ on the vertex set
 $\{x_1, \dots, x_n\}$, the \emph{Stanley-Reisner ideal} of $\Delta$ 
is the monomial ideal
\[
I_\Delta = (x_{i_1}x_{i_2}\cdots x_{i_s} ~|~ \{x_{i_1}, x_{i_2}, \dots, x_{i_s}\} \notin \Delta)
\]
in the ring $S = k[x_1, \dots, x_n]$.  The quotient ring $S/I_\Delta$ is the \emph{Stanley-Reisner ring}.  
\end{definition}

For completeness, we also recall
the definition of the Alexander dual.  

\begin{definition}
Given a subset $\sigma \subseteq \{x_1,
\ldots,x_n\}$, let $\overline{\sigma} = \{x_1,\ldots,x_n\} \setminus \sigma$.
The {\it Alexander dual} of a simplicial complex $\Delta$, denoted
$\Delta^{\vee}$, is the simplicial complex
$\Delta^{\vee} = \{\overline{\sigma} ~|~ \sigma \not\in \Delta\}.$
\end{definition}

\begin{theorem} \label{bettinumbers}
Let $(f_{-1},f_0,\ldots,f_{d})$ be the $f$-vector
of a $d$-dimensional simplicial complex $\Delta$
on $V= \{x_1,\ldots,x_n\}$, and let $\chi$
be any $s$-colouring of $\Delta$.   The graded Betti numbers
of $I_{\Delta_{\chi}^\vee}$  in $R$ are given by the formula
\[
\beta_{i, i+n} (I_{\Delta_\chi^\vee}) = \sum_{j = i}^{d+1} {j \choose i}f_{j-1}(\Delta).
\]
In particular,
$\operatorname{proj-dim}(I_{\Delta_\chi^\vee}) = \operatorname{reg}(R/I_{\Delta_{\chi}})=d+1.$
\end{theorem}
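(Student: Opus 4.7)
The plan is to combine Corollary \ref{shellingOrder} and the Eagon--Reiner theorem with Hochster's formula, combinatorial Alexander duality, and Theorem \ref{hvector}. First, since $\Delta_\chi$ is Cohen-Macaulay by Corollary \ref{shellingOrder}, the Eagon--Reiner theorem ensures that $I_{\Delta_\chi^\vee}$ has a linear resolution. Every facet of $\Delta_\chi$ has cardinality $s$ by Theorem \ref{correspondence}, so the minimal generators of $I_{\Delta_\chi^\vee}$—which are the monomials supported on the complements of these facets in the $(n+s)$-element vertex set of $R$—all have degree exactly $n$. Hence $\beta_{i,j}(I_{\Delta_\chi^\vee})=0$ unless $j=i+n$, and the problem reduces to computing $\beta_{i,i+n}(I_{\Delta_\chi^\vee})$.

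The next step is to rewrite these Betti numbers in terms of links of $\Delta_\chi$. I would apply Hochster's formula, use the straightforward identity $\Delta_\chi^\vee|_\sigma=(\operatorname{link}_{\Delta_\chi}(\overline{\sigma}))^\vee$ (with the dual on the right taken inside the vertex set $\sigma$, which is immediate by unpacking the two definitions), and then apply combinatorial Alexander duality on $\sigma$. This yields
\[
\beta_{i,i+n}(I_{\Delta_\chi^\vee})=\sum_{\substack{\tau\in\Delta_\chi\\|\tau|=s-i}}\dim_k \tilde{H}_{i-1}(\operatorname{link}_{\Delta_\chi}(\tau);k),
\]
and because $\Delta_\chi$ is Cohen-Macaulay, each link appearing on the right is itself Cohen-Macaulay of dimension $i-1$, so its only possibly nonzero reduced homology sits in the top degree and has dimension equal to the top $h$-number $h_i(\operatorname{link}_{\Delta_\chi}(\tau))$.

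What remains is the combinatorial identity
\[
\sum_{\substack{\tau\in\Delta_\chi\\|\tau|=s-i}} h_i(\operatorname{link}_{\Delta_\chi}(\tau))=\sum_{j\ge i}\binom{j}{i}h_j(\Delta_\chi).
\]
I would prove this by expanding each top $h$-number as a reduced Euler characteristic of the link and swapping the order of summation over pairs $\tau\subseteq\sigma$ in $\Delta_\chi$ with $|\tau|=s-i$, so that both sides collapse to $\sum_{\sigma\in\Delta_\chi}(-1)^{s-|\sigma|}\binom{|\sigma|}{s-i}$; for the right-hand side this amounts to substituting $t=1+y$ in the standard identity $h(\Delta_\chi;t)=\sum_\sigma t^{|\sigma|}(1-t)^{s-|\sigma|}$ and extracting the coefficient of $y^i$. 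Substituting $h_j(\Delta_\chi)=f_{j-1}(\Delta)$ via Theorem \ref{hvector} then produces the stated formula.

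The two ``in particular'' assertions follow directly: $\operatorname{proj-dim}(I_{\Delta_\chi^\vee})$ is the largest index $i$ for which $\sum_{j=i}^{d+1}\binom{j}{i}f_{j-1}(\Delta)$ is nonzero, which is $i=d+1$ (where the sum contains the positive term $f_d(\Delta)$); and for the Cohen-Macaulay Stanley-Reisner ring $R/I_{\Delta_\chi}$ the Castelnuovo--Mumford regularity equals the largest index of a nonzero entry of its $h$-vector, and by Theorem \ref{hvector} this is again $d+1$. The step I expect to cost the most care is the Alexander-duality bookkeeping—verifying $\Delta_\chi^\vee|_\sigma=(\operatorname{link}_{\Delta_\chi}(\overline{\sigma}))^\vee$ and correctly tracking the homological and degree shifts through the duality; once that is in place, the $h$-vector identity is a short Euler-characteristic computation.
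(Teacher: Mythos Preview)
Your argument is correct, but it takes a substantially longer route than the paper's. The paper simply invokes \cite[Corollary~5]{ER}, which for a pure shellable complex $\Gamma$ of dimension $s-1$ gives the generating-function identity
\[
\sum_{i\ge 1}\beta_i(R/I_{\Gamma^\vee})\,t^{i-1}=\sum_{i\ge 0}h_i(\Gamma)(t+1)^i,
\]
and then reads off the coefficient of $t^i$ after noting (via Eagon--Reiner) that the resolution is linear with generators in degree $n$; substituting $h_j(\Delta_\chi)=f_{j-1}(\Delta)$ from Theorem~\ref{hvector} finishes the computation in one line. Your Hochster-plus-Alexander-duality computation, together with the Euler-characteristic identity for the top $h$-number of links in a Cohen--Macaulay complex, is precisely a re-derivation of that Eagon--Reiner corollary in this setting. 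So the two proofs agree at the level of content; the difference is that the paper cites the packaged result while you rebuild it from first principles. Your approach is more self-contained and makes explicit where the Cohen--Macaulay hypothesis enters (namely, in collapsing $\dim_k\tilde H_{i-1}(\operatorname{link}_{\Delta_\chi}(\tau))$ to the top $h$-number), but the paper's version is considerably shorter. For the regularity assertion, the paper uses the duality identity $\operatorname{proj-dim}(I_{\Delta^\vee})=\operatorname{reg}(R/I_\Delta)$ from \cite[Proposition~8.1.10]{HH}, whereas you use the equivalent fact that for a Cohen--Macaulay Stanley--Reisner ring the regularity equals the degree of the $h$-polynomial; both are standard and yield $d+1$.
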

\begin{proof}  
The projective dimension follows directly from our formula, and for
the regularity, we use the identity (e.g., see \cite[Proposition 8.1.10]{HH})
that $\operatorname{proj-dim}(I_{\Delta^{\vee}}) = \operatorname{reg}(R/I_{\Delta})$.

Because $\Delta_{\chi}$ is pure and vertex decomposable (and thus shellable),
\cite[Corollary 5]{ER} gives
\begin{equation}\label{erform}
\sum_{i \geq 1} \beta_i(R/I_{\Delta_{\chi}^\vee})t^{i-1} = \sum_{i \geq 0}
h_i(\Delta_{\chi})(t+1)^i.
\end{equation}
Note that in \cite{ER}, the authors are taking the resolution of $R/I_{\Delta_{\chi}^{\vee}}$,
so $\beta_i(R/I_{\Delta_{\chi}^\vee}) = \beta_{i-1}(I_{\Delta_{\chi}^{\vee}})$.  
Furthermore, although the formula of \cite{ER}
is expressed in terms of total
graded Betti numbers, the resolution of $I_{\Delta_\chi^{\vee}}$ is linear (this is because
$\Delta_{\chi}$ is shellable and pure of dimension $s-1$, and
 hence $I_{\Delta_{\chi}^{\vee}}$ is generated in degree $n$ and is componentwise linear,
which implies the ideal has a linear resolution).  We therefore have
 $\beta_{i-1}(I_{\Delta_{\chi}^{\vee}}) = \beta_{i-1,n+i-1}(I_{\Delta_{\chi}^{\vee}})$.

To finish the proof,  Theorem \ref{hvector} allows us to replace $h_i(\Delta_{\chi})$ with
$f_{i-1}(\Delta)$  in the formula \eqref{erform}, thus giving the desired formula for
 of $\beta_{i-1,n+i-1}(I_{\Delta_{\chi}^{\vee}})$.
\end{proof}

\begin{example}
Let $\Delta$ be the simplicial complex of Example \ref{RunningExample} and let
$\chi$ be the 3-colouring given by the partition $\{x_1, x_4\} \cup\{x_2\} \cup \{x_3\}$.  The $f$-vector of $\Delta$ is
\[
f(\Delta) = (1, 4, 5, 1).
\]
Then applying the formula from Theorem \ref{bettinumbers}, we see that the Betti numbers of $I_{\Delta_\chi^\vee}$ are
\[
\beta_{0, 4}(I_{\Delta_\chi^\vee}) = 11, ~~ \beta_{1, 5}(I_{\Delta_\chi^\vee}) = 17,  ~~ \beta_{2, 6}(I_{\Delta_\chi^\vee}) = 8, ~~ \beta_{3, 7}(I_{\Delta_\chi^\vee}) = 1.
\]
\end{example}

\begin{remark}
For any valid $f$-vector $f(\Delta) = (f_0,\ldots,
f_d)$, the sequence
\begin{equation}\label{bettisequence}
\left(\sum_{j=0}^{d+1} \binom{j}{0} f_{j-1}(\Delta),\sum_{j=1}^{d+1} \binom{j}{1} f_{j-1}(\Delta),\ldots,\sum_{j=d+1}^{d+1} \binom{j}{d+1} f_{j-1}(\Delta)\right)
\end{equation}
is a valid sequence of Betti numbers for an ideal with a linear resolution
by Theorem \ref{bettinumbers}.
Herzog, Sharifan, and Varbaro \cite{HSV} classified all
valid sequences of Betti numbers for an ideal with a linear resolution.  In
particular, they proved that $m = (m_0,m_1,\ldots,m_{d+1})$ is an $O$-sequence
if and only if  
\[\left(\sum_{j=0}^{d+1} \binom{j}{0} m_{j},\sum_{j=1}^{d+1} \binom{j}{1} m_{j},\ldots,\sum_{j=d+1}^{d+1} \binom{j}{d+1} m_j \right)\]
is a valid sequence of Betti numbers for an ideal with a linear resolution.

Because $f$-vectors are $O$-sequences, \cite{HSV} also implies 
that \eqref{bettisequence} is the Betti sequence of an ideal with linear resolution.
Our work, in particular Theorem \ref{bettinumbers}, highlights how to start
with a simplicial complex with a given $f$-vector, and find a square-free
monomial ideal whose graded linear resolution has Betti sequence given
by \eqref{bettisequence}.  This contrasts with the main results of
\cite{HSV} since the
ideal they construct with Betti sequence \eqref{bettisequence} 
need not be a square-free monomial
ideal. 
\end{remark}


\section{Application: independence complexes of chordal graphs}

In the previous section, we saw how to construct a balanced, vertex
decomposable simplicial complex $\Delta_\chi$ from any simplicial complex 
$\Delta$ and any $s$-colouring $\chi$ of $\Delta$
with the property that $f(\Delta) = h(\Delta_\chi)$.  In this section,
 we give a criterion
for when this construction can be reversed.  As an application, 
we study $h$-vectors and $f$-vectors of 
the independence complexes of chordal graphs.

We start with our criterion for ``reversing'' the process of the last section.

\begin{definition}  
Let $\Delta$ be a simplicial complex on the vertex set $V$ and let $W \subseteq V$.  The {\em restriction of $\Delta$ to $W$} is the subcomplex
\[
\Delta |_W = \{F \in \Delta ~|~ F \subseteq W\}.
\]
\end{definition}

\begin{definition}
Suppose $\Delta = \langle F_1, \dots, F_s\rangle$ is a simplicial complex on the vertex set $V$.  We say that $\Delta$ has a \emph{facet restriction with respect to $F$} if $F$ is a facet of $\Delta$ such that 
\[
\Delta |_{V \setminus F} = \{F_1 \setminus F, \dots, F_s \setminus F\}.
\]
\end{definition}

Note that the inclusion 
$\Delta |_{V \setminus F} \supseteq 
\{F_1 \setminus F, \dots, F_s \setminus F\}$ always holds; however, 
in general the two sets may not be equal as we see in the following example.

\begin{example}
Let $\Delta = \langle 123, 234, 345, 456 \rangle$ (see Figure \ref{no facet restriction}).
By considering each facet of $\Delta$, we can show it has no facet restriction.
Let $F$ be the facet $123$.  Then
\[
\Delta |_{V \setminus F} =
 \Delta |_{456} =
 \{ \emptyset, 4, 5, 6, 45, 56, 46, 456\} 
\neq \{123 \setminus F, 234 \setminus F, 345 \setminus F, 456 \setminus F\}
 = \{\emptyset, 4, 45, 456\}\, .
\]
Similarly, if we consider the facet $234$ we see that
\[
\Delta |_{V \setminus 234} = \Delta |_{156} 
= \{ \emptyset, 1, 5, 6, 56\}
 \neq \{123 \setminus 234, 234 \setminus 234, 345 \setminus 234, 456 \setminus 234\} 
= \{1, \emptyset, 5, 56\}\, .
\]
By symmetry we also have
\[
\Delta |_{V \setminus 345}
 \neq \{123 \setminus 345, 234 \setminus 345, 345 \setminus 345, 456 \setminus 345\}
\]
and 
\[
\Delta |_{V \setminus 456}
 \neq \{123 \setminus 456, 234 \setminus 456, 345 \setminus 456, 456 \setminus 456\} \, .
\]
Therefore the simplicial complex $\Delta$ has no facet restriction.
\end{example}

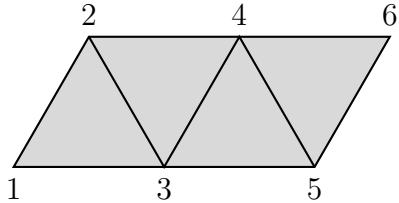
\begin{figure}[h]
\begin{tikzpicture}[thick, scale =2]
\filldraw[fill= black!15!white] (0,0) node [anchor = north]{1} -- (.5, .866) -- (2.5, .866) node [anchor = south]{6} -- (2,0) --  cycle;
\draw (.5, .866) node [anchor = south]{2}-- (1,0) node [anchor = north]{3} -- (1.5, .866) node [anchor = south]{4} -- (2,0) node [anchor = north]{5};
\end{tikzpicture}
\caption{\label{no facet restriction} The simplicial complex $\langle 123, 234, 345, 456 \rangle$ has no facet restriction.}
\end{figure}

\begin{example}  \label{facet restriction example}
Let $\Delta$ be the simplicial complex 
$\langle124, 245, 235, 456 \rangle$ (see Figure \ref{facet restriction}).
 Then $\Delta$ has a facet restriction with respect to the facet $245$ since 
\[
\Delta |_{V \setminus 245}  = \Delta |_{136} = \{\emptyset, 1, 3, 6\}
 =  \{124 \setminus 245, 245 \setminus 245, 235 \setminus 245, 456 \setminus 245\} \, .
\]
\end{example}

\begin{figure}[h]
\begin{tikzpicture}[thick, scale = 2]
\filldraw[fill= black!15!white](0,0) node [anchor = south]{1} -- (2,0) node [anchor = south]{3} -- (1, -1.732) node [anchor = north]{6} -- cycle;
\draw(1,0) node [anchor = south]{2} -- (1.5, -.866) node [anchor = north west]{5} -- (.5, -.866) node [anchor = north east]{4}-- cycle;
\end{tikzpicture}
\caption{\label{facet restriction} The simplicial complex $\langle 124, 245, 235, 456\rangle$ has a facet restriction with respect to the facet 245.}
\end{figure}
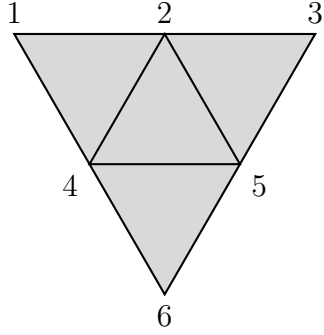

The existence of a facet restriction allows us to find a converse to 
Theorem \ref{hvector}.

\begin{theorem}\label{criterion}
Let $\Delta = \langle F_1,\ldots,F_t \rangle$ be a pure, balanced
simplicial complex such that $\Delta$ has a 
facet restriction with respect to the facet $F$.  
Then $\Delta = (\Delta|_{V\setminus F})_\chi$ where $\chi$ is
the colouring induced from the colouring of $\Delta$.  In
particular,
 $\Delta$ is vertex decomposable and $h(\Delta) = f(\Delta |_{V \setminus F})$. 
\end{theorem}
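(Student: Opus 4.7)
The plan is to recognize the facet $F$ as playing the role of the ``whisker'' vertices $\{y_1,\ldots,y_d\}$ in Construction \ref{DeltaChi} applied to the restricted complex $\Delta|_{V\setminus F}$, and then cash in on Theorems \ref{vertex decomposable} and \ref{hvector}.

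First I would set up the colouring. Since $\Delta$ is balanced of dimension $d-1$, fix a $d$-colouring $V = V_1 \cup \cdots \cup V_d$. Because $F$ is a facet, $|F|=d$ and $F$ meets each colour class in exactly one vertex; write $F = \{f_1, \ldots, f_d\}$ with $f_i \in V_i$. Let $W = V \setminus F$ and put $W_i = V_i \setminus \{f_i\}$, so that $W = W_1 \cup \cdots \cup W_d$ is an induced $d$-colouring $\chi$ of $\Delta|_W$ (some $W_i$ may be empty, which is allowed). Identify the new vertex $y_i$ appearing in $(\Delta|_W)_\chi$ with $f_i$.

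Next I would show $\Delta=(\Delta|_W)_\chi$ by matching facets. By Theorem \ref{correspondence}, the facets of $(\Delta|_W)_\chi$ are parametrized by the faces of $\Delta|_W$ via $\sigma \mapsto \sigma \cup \{f_j : V_j \cap \sigma = \emptyset\}$. By the facet-restriction hypothesis, the faces of $\Delta|_W$ are exactly $\{F_1 \setminus F, \ldots, F_t \setminus F\}$. The key point is then a short colour-counting argument: for each facet $F_i$ of $\Delta$, the balanced property forces $F_i$ to have exactly one vertex in each $V_j$, so that vertex either lies in $W_j$ (and then contributes to $F_i\setminus F$) or equals $f_j$ (and then $V_j\cap(F_i\setminus F)=\emptyset$). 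Consequently
\[
(F_i\setminus F)\cup\{f_j : V_j\cap(F_i\setminus F)=\emptyset\} = F_i ,
\]
which identifies the facet of $(\Delta|_W)_\chi$ labelled by $F_i\setminus F$ with $F_i$ itself. The same argument shows $F_i\mapsto F_i\setminus F$ is injective, so no facets collide and we get the equality $\Delta = (\Delta|_W)_\chi$.

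The main obstacle, and the only place the balanced hypothesis is really used, is verifying the step $(F_i\setminus F)\cup\{f_j : V_j\cap(F_i\setminus F)=\emptyset\} = F_i$; without a single vertex per colour class in $F_i$ one cannot reconstruct $F_i\cap F$ from $F_i\setminus F$. Once the identification $\Delta=(\Delta|_W)_\chi$ is in hand, Theorem \ref{vertex decomposable} immediately gives that $\Delta$ is vertex decomposable, and the equality $h(\Delta_\chi)=f(\Delta)$ established in the proof of Theorem \ref{hvector} (applied to $\Delta|_W$ in place of $\Delta$) yields $h(\Delta)=f(\Delta|_{V\setminus F})$.
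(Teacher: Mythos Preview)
Your proof is correct and follows essentially the same strategy as the paper's: set up the induced $d$-colouring on $\Delta|_{V\setminus F}$, identify the new vertices $y_j$ with the elements $f_j$ of $F$, and verify that the facets of $(\Delta|_{V\setminus F})_\chi$ coincide with those of $\Delta$, then invoke Theorems \ref{vertex decomposable} and \ref{hvector}. If anything, your explicit computation $(F_i\setminus F)\cup\{f_j : V_j\cap(F_i\setminus F)=\emptyset\}=F_i$ makes the facet identification more transparent than the paper's version, which only records a bijection between facet sets and leaves the pointwise equality implicit.
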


\begin{proof}
Let $d-1$ be the dimension of $\Delta$.  
Because $\Delta$ is pure and balanced, the colouring $\chi$ is given by a partition
$V = V_1 \cup V_2 \cup \cdots \cup V_d$ such that 
$|F_j \cap V_i| = 1 ~~\mbox{for all $1 \leq j \leq t$ and $1 \leq i \leq d$}.$
After relabelling, we can assume that $F_1$ is the facet that gives that
facet restriction.
Note that $\Delta|_{V\setminus F_1}$ is a simplicial complex on $Y = V \setminus F_1$, and
 is $d$-colourable since  $\Delta|_{V\setminus F_1}$ inherits a colouring from $\chi$
given by:
\[ Y = V\setminus F_1 = (V_1 \setminus F_1) \cup (V_2 \setminus F_1)
\cup \cdots \cup (V_d \setminus F_1).\]
Abusing notation, let $\chi$ denote this new colouring.
Then $(\Delta|_{V\setminus F_1})_\chi$ 
is a balanced vertex decomposable simplicial complex 
such that $h((\Delta|_{V\setminus F_1})_\chi) = f(\Delta|_{V\setminus F_1})$ by 
Theorem \ref{hvector}.

To complete the proof, it suffices to show that $(\Delta|_{V\setminus F_1})_\chi$ and $\Delta$
are the same simplicial complexes, but with a different labelling of the vertices.
By Theorem \ref{correspondence}, the facets of $(\Delta|_{V\setminus F_1})_\chi$ are in one-to-one
correspondence with the faces of $\Delta|_{V\setminus F_1}$.  But we also
have that the facets of $\Delta$ are in one-to-one correspondence with the
faces of $\Delta|_{V\setminus F_1}$ via the map $F_i \mapsto F_i \setminus F_1$.  
Indeed, this map is clearly onto by our assumption that $\Delta$ has a facet restriction with respect to $F_1$.
It suffices to show that this map is one-to-one.  So, suppose
$F_i \setminus F_1 = F_j \setminus F_1$, but $F_i \neq F_j$.  This
means that there is a vertex $x \in F_i \setminus F_j$ because the simplicial
complex is pure.  Since
 $\Delta$ is balanced, there is a vertex $y \in F_j \setminus
F_i$ with the same colour as $x$.  Because $F_i \setminus F_1 = F_j\setminus F_1$,
we must have $x$ and $y$ in $F_1$.  But this contradicts the colouring of $\Delta$.
By combining these two one-to-one correspondences, we get the desired
bijection between the facets of $\Delta$ and $(\Delta|_{V\setminus F_1})_\chi$.  
\end{proof}

\begin{example}
In Example \ref{facet restriction example} we saw that the simplicial complex 
\[
\Delta = \langle 124, 245, 235, 456 \rangle
\]
has a facet restriction with respect to the facet 245.  
Since $\Delta |_{136} = \{\emptyset, 1, 3, 6\}$,  the $f$-vector of $\Delta|_{136}$ is $f(\Delta |_{136}) = (1, 3)$.  Therefore the $h$-vector of $\Delta$ is 
$
h(\Delta) = f(\Delta|_{136}) = (1, 3).$
\end{example}

To apply Theorem \ref{criterion} to a class of simplicial complexes,
we need to justify the existence of facet restrictions.  We round
out this paper by focusing on the independence complexes (as introduced
in Section 3) of chordal graphs.  We first recall:

\begin{definition}
A graph $G$ is {\it chordal} if every induced cycle of $G$ of length $\geq 4$ has
a chord.
\end{definition}

We will prove the following fact about the independence complexes of chordal graphs. 

\begin{lemma}\label{chordal facet restriction}
Let $\Delta = {\rm Ind}(G)$ be the independence complex of a chordal
graph $G$.  If $\Delta$ is also pure, then $\Delta$ has a facet restriction.
\end{lemma}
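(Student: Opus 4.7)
The plan is to induct on $|V(G)|$ using the fact that every chordal graph has a simplicial vertex. The base case $V(G) = \emptyset$ is trivial since $\Delta = \{\emptyset\}$ is its own unique facet. For the inductive step, fix a simplicial vertex $v$ of $G$ (so $N(v)$ is a clique) and consider the induced subgraph $G'' := G \setminus N[v]$, which is again chordal. I would first verify that ${\rm Ind}(G'')$ is itself pure: since $N[v]$ is a clique, every maximal independent set of $G$ meets $N[v]$ in exactly one vertex, and the assignment $F_0 \mapsto F_0 \cup \{v\}$ is a bijection between facets of ${\rm Ind}(G'')$ and facets of $\Delta$ containing $v$, so purity of $\Delta$ descends to ${\rm Ind}(G'')$. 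The inductive hypothesis then produces a facet $F''$ of ${\rm Ind}(G'')$ giving a facet restriction; I would set $F := F'' \cup \{v\}$ and claim that $F$ witnesses a facet restriction of $\Delta$.

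To prove the claim, I would take an arbitrary independent set $S \subseteq V \setminus F$ of $G$ and construct a facet $F'$ of $\Delta$ with $F' \setminus F = S$. Split on whether $S$ meets $N(v)$. In the easy case $S \cap N(v) = \emptyset$, the set $S$ already lies in $V(G'') \setminus F''$, so the inductive hypothesis writes $S = F_0 \setminus F''$ for some facet $F_0$ of ${\rm Ind}(G'')$, and $F' := F_0 \cup \{v\}$ does the job since $F_0 \cup \{v\}$ is a facet of $\Delta$ by the bijection above.

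The main obstacle is the complementary case $S \cap N(v) \neq \emptyset$; because $N(v)$ is a clique, this intersection consists of a single vertex $u$, and $S' := S \setminus \{u\}$ lies in $V(G'') \setminus F''$. Applying the inductive hypothesis to $S'$ yields a facet $F_0$ of ${\rm Ind}(G'')$ with $F_0 \setminus F'' = S'$; write $T := F_0 \cap F'' \subseteq F''$ and set $F' := S \cup T$. To show $F'$ is an independent set of $G$, I would argue that $T \subseteq V \setminus N[v]$ has no edge to $u \in N(v)$, while $T$ has no edge to $S' = F_0 \setminus F''$ because $F_0 = S' \cup T$ is independent in $G''$. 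To show $F'$ is maximal in $G$, I would check that every $w \notin F'$ has a neighbor in $F'$: if $w \in N[v]$ then $w$ is adjacent to $u$ using that $N[v]$ is a clique, while if $w \in V(G'')$ then maximality of $F_0$ in $G''$ forces $w$ to be adjacent to some vertex of $F_0 \subseteq F'$. Finally $F' \setminus F = S$ follows because $T \subseteq F''$ and $v \notin F'$ (as $u$ is adjacent to $v$), completing the facet restriction of $\Delta$ with respect to $F$.
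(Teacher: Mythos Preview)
There is a genuine gap in Case~2. Your claim that ``$T \subseteq V \setminus N[v]$ has no edge to $u \in N(v)$'' is false: being outside $N[v]$ only means not being adjacent to $v$, not to $u$. Here is a concrete failure. Take $G$ to be the path $v$--$u$--$a$--$b$, so $v$ is simplicial with $N[v]=\{v,u\}$ and $G''$ is the single edge $ab$. Both $\{a\}$ and $\{b\}$ give facet restrictions of ${\rm Ind}(G'')$; suppose your induction hands you $F''=\{a\}$, so $F=\{v,a\}$. Now take $S=\{u\}\in\Delta|_{V\setminus F}$. You are in Case~2 with $S'=\emptyset$; the only facet $F_0$ of ${\rm Ind}(G'')$ with $F_0\setminus F''=\emptyset$ is $F_0=\{a\}$, giving $T=\{a\}$ and $F'=\{u,a\}$, which is not independent since $ua$ is an edge. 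In fact $\Delta$ has \emph{no} facet restriction with respect to $\{v,a\}$: the facets of $\Delta={\rm Ind}(G)$ are $\{v,a\},\{v,b\},\{u,b\}$, and subtracting $\{v,a\}$ from each yields $\{\emptyset,\{b\},\{u,b\}\}$, which misses $\{u\}\in\Delta|_{\{u,b\}}$.

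The underlying problem is that your inductive hypothesis gives no control over how the vertices of $F''$ sit inside $G$: they behave well in $G''$, but may be adjacent to vertices of $N(v)$ in $G$. The paper sidesteps this by invoking the Herzog--Hibi--Zheng structure theorem (Theorem~\ref{HHZ}), which partitions $V=C_1\cup\cdots\cup C_t$ into maximal cliques each containing a free vertex $y_i$ of $Cl(G)$, and takes $F=\{y_1,\ldots,y_t\}$. The crucial global property is that each $y_i$ is simplicial \emph{in $G$}, so its entire neighbourhood is $C_i\setminus\{y_i\}$; this is exactly what makes $H'=H\cup\{y_i : C_i\cap H=\emptyset\}$ independent for an arbitrary $H\in\Delta|_{V\setminus F}$. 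If you want to salvage the inductive route, you would need to strengthen the inductive statement so that every vertex of the produced facet is simplicial in the ambient graph $G$---but carrying that through essentially reproduces the HHZ partition.
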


From this lemma, we can deduce the following result, which proves a special case of Conjecture \ref{hvectorconjecture}.

\begin{theorem}
We have the 
the following equivalence of sets:
\[\left\{
\begin{array}{c}
\mbox{$f$-vectors of independence} \\
\mbox{complexes of chordal graphs}
\end{array}
\right\} =
\left\{
\begin{array}{c}
\mbox{$h$-vectors of balanced, vertex decomposable}\\
\mbox{independence complexes of chordal graphs}
\end{array}\right\}.\]
\end{theorem}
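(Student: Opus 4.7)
The plan is to establish the two inclusions separately, invoking Lemma \ref{chordal facet restriction} and Theorem \ref{criterion} for one direction and the clique-whiskering Construction \ref{cliquewhisker} for the other.

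For the inclusion $\supseteq$, I would begin with an $h$-vector $h(\Delta)$ of a balanced, vertex decomposable independence complex $\Delta = \mathrm{Ind}(G)$ of a chordal graph $G$. Since in this paper vertex decomposable complexes are pure by convention, Lemma \ref{chordal facet restriction} supplies a facet $F$ of $\Delta$ at which $\Delta$ admits a facet restriction. Theorem \ref{criterion} then yields $h(\Delta) = f(\Delta|_{V \setminus F})$. But $\Delta|_{V\setminus F}$ coincides with $\mathrm{Ind}(G|_{V\setminus F})$, and an induced subgraph of a chordal graph is chordal, so $h(\Delta)$ lies in the right-hand set.

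For the inclusion $\subseteq$, I would start with $f(\mathrm{Ind}(G))$ for a chordal graph $G$ and exhibit a balanced, vertex decomposable independence complex of a chordal graph realising this as its $h$-vector. Choose any clique partition $\pi$ of $V_G$ (the singleton partition $V_G = \{x_1\}\cup\cdots\cup\{x_n\}$ always works), and form $G^\pi$ as in Construction \ref{cliquewhisker}. The argument in the proof of the Cook--Nagel corollary already shows that $\pi$ is an $s$-colouring of $\mathrm{Ind}(G)$ and that $\mathrm{Ind}(G^\pi) = \mathrm{Ind}(G)_\pi$, so Theorems \ref{vertex decomposable} and \ref{hvector} immediately give that this complex is balanced, vertex decomposable, and has $h$-vector equal to $f(\mathrm{Ind}(G))$.

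The only remaining step, which I expect to be the main obstacle, is to verify that $G^\pi$ is itself chordal. My plan is the following short argument: suppose $C$ is an induced cycle in $G^\pi$ of length at least $4$. Chordality of $G$ forces $C$ to contain at least one whisker vertex $y_i$. The two neighbours of $y_i$ along $C$ both lie in $V_i$, which is a clique of $G$; they are therefore adjacent in $G^\pi$, producing a chord of $C$ and contradicting that $C$ is induced. Hence $G^\pi$ is chordal. With this check in place, essentially all the algebraic and combinatorial heavy lifting is absorbed by Theorems \ref{vertex decomposable}, \ref{hvector}, \ref{criterion} and Lemma \ref{chordal facet restriction}, so the chordality verification is the only genuinely new ingredient required.
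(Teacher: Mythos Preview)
Your proposal is correct and follows essentially the same route as the paper: the paper proves $\subseteq$ via clique-whiskering (noting $\mathrm{Ind}(G^\pi)=\mathrm{Ind}(G)_\pi$ and invoking Theorems \ref{vertex decomposable} and \ref{hvector}), and proves $\supseteq$ via Lemma \ref{chordal facet restriction} and Theorem \ref{criterion} together with $\Delta|_{V\setminus F}=\mathrm{Ind}(G|_{V\setminus F})$. Two small remarks: in your $\supseteq$ paragraph you mean that $h(\Delta)$ lies in the \emph{left}-hand set, not the right; and your explicit chordality argument for $G^\pi$ actually supplies a detail the paper only asserts (``it follows from Construction \ref{cliquewhisker} that if $G$ is chordal, then so is $G^\chi$'').
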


\begin{proof}
If $f(\Delta)$ is the $f$-vector of $\Delta = {\rm Ind}(G)$ 
when $G$ is chordal, then for any 
colouring $\chi$ of $\Delta = {\rm Ind}(G)$, the simplicial
complex $\Delta_{\chi}$ is balanced and vertex decomposable
by Theorem \ref{vertex decomposable}, 
and $f(\Delta) = h(\Delta_{\chi})$ by Theorem \ref{hvector}.  It remains
to note that $\Delta_{\chi}$ is the independence complex of the graph $G^{\chi}$,
the clique whiskering of $G$
using the clique partition of $G$ induced by $\chi$.   Furthermore,
it follows from Construction \ref{cliquewhisker} that if $G$ is chordal, then
so is $G^{\chi}$.  This completes the first containment.

To show the reverse containment, let $G$ be any chordal graph such that
$\Delta = {\rm Ind}(G)$ is balanced and vertex decomposable.  
Because $\Delta$ is vertex decomposable, and thus pure,
by Lemma \ref{chordal facet restriction}, the simplicial
complex $\Delta$ has a facet restriction with respect to some facet $F$.
But then by Theorem \ref{criterion}, we have $h(\Delta) = f(\Delta|_{V \setminus F})$.  
To complete the argument, we note that  
\[
\Delta|_{V \setminus F} = {\rm Ind}(G)|_{V \setminus F} 
= {\rm Ind}(G |_{V \setminus F}).\]  
The graph $G |_{V \setminus F}$ is an induced subgraph of a chordal graph, and so is a chordal graph.  So $h(\Delta) = f({\rm Ind}(G|_{V \setminus F}))$, thus completing the proof.
\end{proof}

\begin{remark}  We note that if a chordal graph has a pure independence complex then that independence complex is always vertex decomposable and balanced, so the right-hand side of the above corollary could be the set of $h$-vectors of pure independence complexes of chordal graphs.
\end{remark}

To prove Lemma \ref{chordal facet restriction} we will require a 
result of Herzog, Hibi, and Zheng.  We first describe another simplicial complex one
can associate to a graph.

\begin{definition}  
For any finite simple graph $G = (V_G,E_G)$ the \emph{clique complex} of $G$ is the simplicial complex 
\[Cl(G) = \{C\subseteq V ~|~ G |_C ~~\mbox{is a clique}\} \, .\]
\end{definition}

\begin{definition}  
Let $\Delta$ be a simplicial complex with vertex set $V$.  
We call $v \in V$ a \emph{free vertex} if $v$ is contained in exactly one facet of $\Delta$.
\end{definition}

\begin{theorem}[{\cite[Theorem 2.1]{HHZ}}]\label{HHZ} Let $G$ be a chordal graph and let $C_1, \dots, C_t$ be all the facets of $Cl(G)$ that contain a free vertex.  The following are equivalent:
\begin{enumerate}
\item[$(a)$] $R/I_{{\rm Ind}(G)}$ is Cohen-Macaulay.
\item[$(b)$] $G$ is unmixed, i.e., all maximal independent sets have the same cardinality.
\item[$(c)$] $V = C_1 \cup C_2 \cup \dots \cup C_t$ is a partition of the vertices of $G$.
\end{enumerate}
\end{theorem}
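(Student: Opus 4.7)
The plan is to prove the three implications $(a)\Rightarrow(b)$, $(b)\Rightarrow(c)$, $(c)\Rightarrow(a)$ separately. The first is essentially free, the third will use the clique‑whiskering corollary already established in the paper, and the middle one (the real work) will be an induction on $|V_G|$ powered by the existence of a simplicial vertex in a chordal graph.

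For $(a)\Rightarrow(b)$, I would invoke the standard fact that if $R/I_\Delta$ is Cohen--Macaulay then $\Delta$ is pure. Since the facets of $\mathrm{Ind}(G)$ are precisely the maximal independent sets of $G$, purity is exactly the statement that $G$ is unmixed.

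For $(b)\Rightarrow(c)$, I would induct on $|V_G|$. In the inductive step, use Dirac's theorem: a chordal graph has a simplicial vertex $v$, i.e.\ one with $N_G(v)$ a clique. A short argument then shows that $C := \{v\}\cup N_G(v)$ is a maximal clique of $G$ and that $v$ lies in \emph{no other} maximal clique (any maximal clique containing $v$ is forced into $\{v\}\cup N_G(v)$), so $v$ is a free vertex of $Cl(G)$ and $C$ is one of the $C_i$. Now set $G' := G|_{V\setminus C}$; this is chordal as an induced subgraph of a chordal graph. I would verify that $G'$ is unmixed as follows: since $v$ is not adjacent to any vertex of $V\setminus C$, the assignment $I'\mapsto I'\cup\{v\}$ is a bijection between maximal independent sets of $G'$ and maximal independent sets of $G$ containing $v$, so all maximal independent sets of $G'$ have common cardinality $k-1$, where $k$ is the cardinality of a maximal independent set of $G$. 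By induction, $V(G') = C_2\cup\cdots\cup C_t$ with each $C_i$ a facet of $Cl(G')$ containing a free vertex $y_i$ of $Cl(G')$. The final step, which I expect to be the main obstacle to write cleanly, is checking that each such $C_i$ is actually a facet of $Cl(G)$ and that $y_i$ is still free in $Cl(G)$: this boils down to the observation that $y_i\in V\setminus C$ implies $y_i\notin N_G(v)$, whence $N_G(y_i)=N_{G'}(y_i)$, so any maximal clique of $G$ through $y_i$ lies inside $V(G')$ and hence must coincide with the unique maximal clique $C_i$ of $G'$ through $y_i$. Combining with $C_1:=C$ yields the desired clique partition of $V$.

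For $(c)\Rightarrow(a)$, I would show that $G$ is itself a clique‑whiskered graph and then apply the corollary to Construction~\ref{cliquewhisker}. Let $y_i\in C_i$ be the free vertex and set $W:=\{y_1,\ldots,y_t\}$, $H:=G|_{V\setminus W}$, and $\pi':=\{\,C_i\setminus\{y_i\}\,:\,C_i\setminus\{y_i\}\ne\emptyset\,\}$, which is a clique partition of $V(H)$. The map $y_i\mapsto$ (the new whisker vertex attached to $C_i\setminus\{y_i\}$) will exhibit an isomorphism $G\cong H^{\pi'}$: the neighborhood identity $N_G(y_i)=C_i\setminus\{y_i\}$, which holds because $y_i$ is a free vertex, guarantees that $y_i$ has exactly the same adjacencies as the whisker vertex. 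By the corollary to Construction~\ref{cliquewhisker}, $\mathrm{Ind}(H^{\pi'})=\mathrm{Ind}(G)$ is vertex decomposable, hence shellable, hence $R/I_{\mathrm{Ind}(G)}$ is Cohen--Macaulay. The only subtlety here is the case analysis when some $C_i=\{y_i\}$, where the corresponding piece of $\pi'$ is empty and $y_i$ becomes an isolated vertex of $G$ paired with a singleton whisker; this is a straightforward bookkeeping matter.
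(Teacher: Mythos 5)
First, note that the paper does not actually prove this statement: it is quoted from Herzog, Hibi, and Zheng \cite{HHZ} as an external input to Lemma \ref{chordal facet restriction}, so there is no internal proof to compare against and your argument must stand on its own. Of your three implications, $(a)\Rightarrow(b)$ (Cohen--Macaulayness forces purity, and facets of ${\rm Ind}(G)$ are the maximal independent sets) and $(c)\Rightarrow(a)$ (exhibiting $G\cong H^{\pi'}$ via the identity $N_G(y_i)=C_i\setminus\{y_i\}$ for the free vertices and then invoking the corollary to Construction \ref{cliquewhisker}) are sound; the latter is a nice way to leverage the paper's machinery, with only the isolated-vertex bookkeeping you already acknowledge.

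The gap is in $(b)\Rightarrow(c)$, at exactly the step you flagged. The inference ``$y_i\in V\setminus C$ implies $y_i\notin N_G(v)$, whence $N_G(y_i)=N_{G'}(y_i)$'' is false: deleting $C=\{v\}\cup N_G(v)$ removes not only $v$ but all of $N_G(v)$, and a vertex outside $C$ may be adjacent to vertices of $N_G(v)$ even though it is not adjacent to $v$. Concretely, let $G$ be the path $a-b-c-d$ (chordal, unmixed with all maximal independent sets of size $2$), take the simplicial vertex $v=a$, so $C=\{a,b\}$ and $G'$ is the single edge $\{c,d\}$. The vertex $c$ is free in $Cl(G')$, yet $N_G(c)=\{b,d\}\neq\{d\}=N_{G'}(c)$, the maximal clique $\{c,d\}$ of $G'$ is a maximal clique of $G$ only by luck, and $c$ is \emph{not} free in $Cl(G)$ (it lies in both $\{b,c\}$ and $\{c,d\}$). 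The induction survives in this example only because the \emph{other} free vertex $d$ of that part has no neighbours in $C$; your argument gives no reason why every part of the partition of $V(G')$ must contain \emph{some} free vertex of $Cl(G')$ with no neighbours in $N_G(v)$. Establishing that requires a genuine use of the unmixedness of $G$ itself (not merely of $G'$) together with chordality, and it is essentially the hard content of the Herzog--Hibi--Zheng theorem. A second, unaddressed point: statement $(c)$ concerns \emph{all} facets of $Cl(G)$ containing a free vertex, so even after lifting $C$ and the parts of $V(G')$ to free-vertex facets of $Cl(G)$, you must still rule out additional free-vertex facets of $Cl(G)$ that meet $N_G(v)$, since such a facet would destroy the disjointness required for a partition.
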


We are now ready to prove Lemma \ref{chordal facet restriction}.

\begin{proof} (of Lemma \ref{chordal facet restriction}) Let $\Delta = {\rm Ind}(G)$
be the independence complex of a chordal graph, and furthermore, assume $\Delta$ is 
pure.   Let $C_1, \dots, C_t$ be the facets of $Cl(G)$ which contain a free vertex.  
Since $\Delta$ is pure, we know that $G$ is unmixed.  Thus by Theorem \ref{HHZ},
we have the partition $V = C_1 \cup \dots \cup C_t$.  
For $1 \leq i \leq t$, let $y_i$ be a free vertex of $Cl(G)$ contained in $C_i$.  
Set $F = \{y_1, \dots, y_t\}$.  We will show that $F$ is a facet of $\Delta$ and that  
$\Delta$ has a facet restriction with respect to $F$. 

It is clear that $F = \{y_1, \dots, y_t\}$ is an independent set since each $y_i$ is in a unique maximal clique $C_i$ and an edge $\{y_i, y_j\}$ would constitute a clique of size 2.   Further, $F$ is a maximal independent set since every vertex $x \notin F$ is in some $C_i$ and therefore adjacent to $y_i$.  Since $\Delta$ is assumed to be pure, this means that every facet has size $t$.
 
 Finally, let $F_1, \dots, F_s$ be the facets of $\Delta$.  To finish the proof 
we will show that 
 \[  \Delta |_{V \setminus F} = \{F_1 \setminus F, \dots, F_s \setminus F\} \, .\]
We simply need to
show $\Delta|_{V \setminus F} \subseteq \{F_1 \setminus F, \dots, F_s \setminus F\}$.
Let $H \in \Delta|_{V \setminus F}$, and define $H' = H \cup \{y_i ~|~ C_i \cap H = \emptyset\}$.  Then $H'$ is independent since the neighbours of $y_i$ are the elements of $C_i \setminus \{y_i\}$.  Since $H'$ has cardinality $t$, it is a facet of $\Delta$.  Therefore $H = H' \setminus F$ which proves that $\Delta |_{V \setminus F} = \{F_1 \setminus F, \dots, F_s \setminus F\}$.
\end{proof}


\end{document}